\newtheorem{theorem}{Theorem}
\newtheorem{lemma}[theorem]{Lemma}
\newtheorem{proposition}[theorem]{Proposition}
\theoremstyle{definition}
\theoremstyle{remark}
\newtheorem{remark}[theorem]{Remark}
\numberwithin{equation}{section}
\newcommand{\R}{\mathbb{R}}
\newcommand{\dfn}[1]{\textit{#1}}
\newcommand{\G}{\Gamma}
\begin{document}

\title{New bounds on maximal linkless graphs}

\date{\today}
\author{
Ramin Naimi, Andrei Pavelescu, and Elena Pavelescu
}

\address{
}

\maketitle
\rhead{IK}

\begin{abstract}
We construct a family of maximal linklessly embeddable graphs on $n$ vertices and $3n-5$ edges for all $n\ge 10$, and another family on $n$ vertices and $m< \frac{25n}{12}-\frac{1}{4}$ edges for  all $n\ge 13$.
The latter significantly improves the lowest edge-to-vertex ratio for any previously known infinite family.
We construct a family of graphs showing that the class of maximal linklessly embeddable graphs differs from the class of graphs that are maximal without a $K_6$ minor studied by L.\ J{\o}rgensen.
We give necessary and sufficient conditions for when the clique sum of two maximal linklessly embeddable graphs over $K_2$, $K_3$, or $K_4$ is a maximal linklessly embeddable graph, and use these results to prove our constructions yield maximal linklessly embeddable graphs.

\end{abstract}
\vspace{0.1in}

\section{Introduction}

All graphs in this paper are finite and simple.
A graph is \dfn{intrinsically linked} (IL) if every embedding of it
in $\R^3$ (or, equivalently, $S^3$) contains a nontrivial 2-component link.
A graph is \dfn{linklessly embeddable} if it is not intrinsically linked (nIL).
A nIL graph  $G$ is \textit{maxnil} if it is not a proper subgraph of a nIL graph of the same order.
The combined work of Conway and Gordon \cite{CG},
Sachs \cite{Sa} and Robertson, Seymour and Thomas \cite{RST} fully characterized IL graphs: a graph is IL if and only if it contains a graph in the Petersen family as a minor.
The Petersen family consists of seven graphs obtained from $K_6$ by $\nabla Y-$moves and $Y\nabla-$moves, as described in Figure~\ref{fig-ty}. 
The $\nabla Y-$move  and the $Y\nabla-$move preserve the IL property.


\begin{figure}[htpb!]
\begin{center}
\begin{picture}(160, 50)
\put(0,0){\includegraphics[width=2.4in]{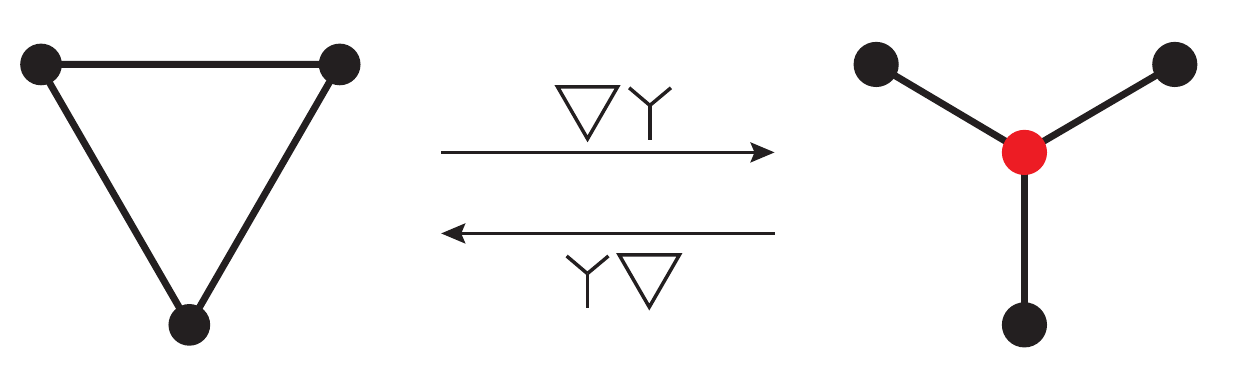}}
\end{picture}
\caption{\small  $\nabla Y-$ and $Y\nabla-$moves} 
\label{fig-ty}
\end{center}
\end{figure}

The  property of being maxnil is, in a way,  analogous to the property of being maximal planar.  
While it is well known that every maximal planar graph with $n$ vertices  has $3n-6$ edges, an analogous statement for maxnil graphs does not exist.
For example, start with a maximal planar graph $G$ and add one vertex $v$ together with all the edges from $v$ to the vertices of $G$. 
Such a graph is maxnil  by \cite{Sa}, and if it has $n$ vertices, then it has $4n-10$ edges. 
In fact, $4n-10$ is an upper bound on the number of edges of a maxnil graph on $n$ vertices.
This follows from work of Mader \cite{Ma} who proved that having more than $4n-10$ edges implies the existence of a $K_6$ minor, which implies the graph is IL.
 
On the other hand,  {J{\o}rgensen \cite{J} and Dehkordi and Farr \cite{DF}} constructed maxnil graphs with $n$ vertices and  $3n-3$ edges.
J{\o}rgensen's maxnil graphs are obtained from the J{\o}rgensen graph in Figure~\ref{FigJ}(a)  by subdividing the highlighted edge incident to the vertex $y$ and then adding edges that connect every new vertex to $u$ and $v$. 
We denote the graph obtained this way through $i$ subdivisions by $J_i$, $i\ge1$.
See Figure~\ref{FigJ}(b).

 
\begin{figure}[htpb!]
\begin{center}
\begin{picture}(400, 150)
\put(0,0){\includegraphics[width=5.4in]{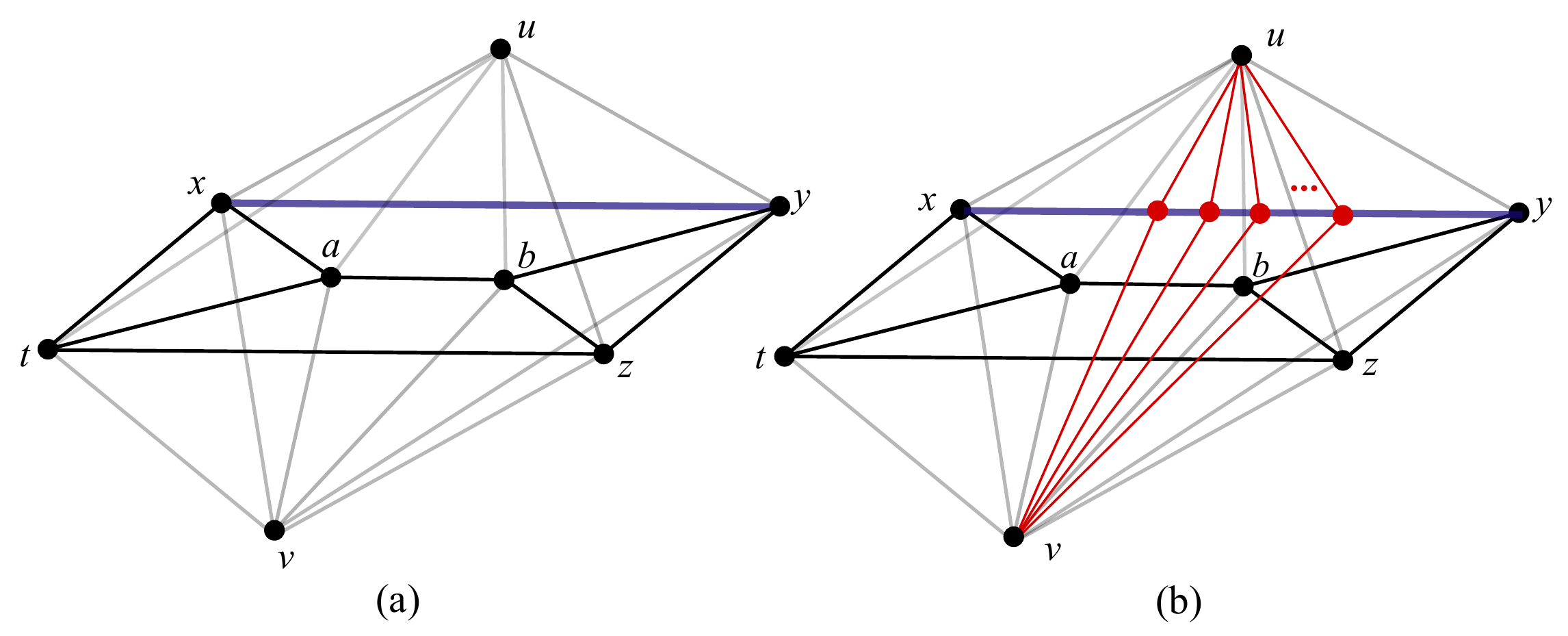}}
\end{picture}
\caption{\small (a) The J{\o}rgensen graph; (b) The graph $J_i$  in J{\o}rgensen's $3n-3$ family.}
\label{FigJ}
\end{center}
\end{figure}

Recently, Aires \cite{A} found a family of graphs with fewer than $3n-3$ edges. 
For each value $n\ge 13$ with $n\equiv3$ (mod 10), he constructed a maxnil graph with $ \frac{14n}{5}-\frac{27}{5}$ edges.
He also proved that  if $G$ is  a maxnil graph with $n \ge 5$ vertices and $m$ edges, then $m\ge 2n$.
This bound is sharp;
the maxnil graph $Q(13,3)$ described by Maharry \cite{M}
has 26 edges and 13 vertices.
 

In Section 2, we present two constructions of maxnil graphs. 
The first one is a family of maxnil graphs with $n\ge 10$  vertices and $3n-5$ edges. 
This construction builds upon a maxnil graph on 10 vertices and 25 edges and uses edge subdivisions. 
The second construction significantly improves on Aires' result on the number of edges.
Using clique sums of copies of $Q(13,3)$, we construct examples with a smaller ``edge-to-vertex ratio,'' as in the following theorem.\\

\textbf{Theorem.}
For each $n\ge 13$, there exists a  maxnil graph $G$ with $n$ vertices and $m <  \frac{25 n}{12} - \frac{1}{4}$ edges.\\

In Section 3, we study the properties of maxnil graphs under  clique sums.  
Some of these results are used in the constructions of Section 2.
We give sufficient and necessary conditions for when the clique sum of two maxnil graphs over $K_2$, $K_3$ or $K_4$ is maxnil.
{ J{\o}rgensen \cite{J} studied clique sums of graphs that are maximal without a $K_6$ minor. We give examples showing that the class of maxnil graphs and the class of graphs that are maximal without a $K_6$ minor are distinct.}

\section{Two families of maxnil graphs}

We note that the J{\o}rgensen graph is 2-apex, i.e., removing the vertices $u$ and $v$ leaves a planar graph $P$. 
Furthermore, the embedding of $P$  in $\mathbb{R}^2$ 
shown in  Figure~\ref{FigJ}(a) has no separating cycles,
i.e., for every cycle $C$ in $P$, one of the components of $\R^2 \setminus C$
contains no vertices of $P$. 
These properties are generalized in the next Lemma, which we use  to prove the graphs in the $3n-5$ family are nIL.

\begin{lemma}
\label{lemma-almost-non-separating}
Let $G$ be a graph with two nonadjacent vertices $u, v$ such that
there exists an embedding $\Sigma$ of $G-\{u,v\}$ in $\R^2$
where for every cycle $C$ in $\Sigma$,
$\R^2 \setminus C$ has a component $X$ such that
$X \cup C$ separates $u$ and $v$ 
(i.e., every path in $G$ from $u$ to $v$ contains a vertex in $X \cup C$).
Then embedding $u$ as $(0,0,1)$ and $v$ as $(0,0,-1)$
and connecting each of them to its neighbors in $\Sigma$ with straight edges
yields a linkless embedding of $G$ in $\R^3$.

\end{lemma}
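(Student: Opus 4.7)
The plan is to verify that every pair of disjoint cycles $C_1, C_2$ in the resulting embedding of $G$ has linking number zero in $\mathbb{R}^3$; for this straight-line embedding one can in fact exhibit explicit disjoint spanning disks in each case, so vanishing linking numbers suffices for linklessness. Since $u, v$ are nonadjacent and disjoint cycles share no vertices, such a pair $\{C_1, C_2\}$ falls into one of four cases:
(A) neither cycle contains $u$ or $v$;
(B) one cycle contains exactly one of $u, v$ and the other lies in $P := G - \{u, v\}$;
(C) one cycle contains $u$ and the other contains $v$, but neither contains the opposite pole;
(D) one cycle contains both $u$ and $v$ and the other lies in $P$.

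Cases (A)--(C) admit a direct spanning-surface argument using the straight-line cone. Case (A) is immediate since both cycles lie in the plane $\mathbb{R}^2 \times \{0\}$. In case (B), writing $C_1 = u\text{-}a\text{-}\gamma_1\text{-}b\text{-}u$ with $\gamma_1 \subset P$, the cone $\Sigma_u$ from $u = (0,0,1)$ over $\gamma_1$ is an embedded disk with $\partial \Sigma_u = C_1$ and $\Sigma_u \cap (\mathbb{R}^2 \times \{0\}) = \gamma_1$; since $C_2$ lies in the plane and $\gamma_1 \cap C_2 = \emptyset$, we get $\Sigma_u \cap C_2 = \emptyset$. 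In case (C) the same cone $\Sigma_u$ sits in $\{z \geq 0\}$ while $C_2 \ni v$ sits in $\{z \leq 0\}$, so any intersection is confined to the plane, where it reduces to $\gamma_1 \cap \gamma_2 = \emptyset$.

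Case (D) is the crux and the only place the hypothesis is used. Write $C_1 = u\text{-}a\text{-}\gamma_\alpha\text{-}b\text{-}v\text{-}b'\text{-}\gamma_\beta\text{-}a'\text{-}u$ with $a, a' \in N(u)$, $b, b' \in N(v)$, and $\gamma_\alpha, \gamma_\beta$ disjoint arcs in $P$. Apply the hypothesis to the planar cycle $C_2$: let $X$ be the component of $\mathbb{R}^2 \setminus C_2$ for which $X \cup C_2$ separates $u$ and $v$ in $G$, and let $Y$ be the other component. I would first show that all four vertices $a, a', b, b'$ lie in $X$. If instead $a \in Y$, then $\gamma_\alpha$, being a connected subset of $\mathbb{R}^2 \setminus C_2$ containing $a$, lies entirely in $Y$, so $b \in Y$ as well; the component of $P[V(P) \cap Y]$ containing $\gamma_\alpha$ then has both the $u$-neighbor $a$ and the $v$-neighbor $b$, contradicting that $Y$ is the non-separating side. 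The symmetric argument with $\gamma_\beta$ places $a', b'$ in $X$.

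To conclude, I would compute $lk(C_1, C_2)$ using the spanning surface $F = D^+ \cup (C_2 \times [0, \epsilon])$ for $C_2$, where $D$ is the bounded component of $\mathbb{R}^2 \setminus C_2$ and $D^+ = D \times \{\epsilon\}$, oriented with upward normal. The planar arcs $\gamma_\alpha, \gamma_\beta$ of $C_1$ sit in $\{z = 0\}$ and the edges $vb, vb'$ sit in $\{z \leq 0\}$, so for sufficiently small $\epsilon$ none of them meet $F$; the only intersections of $C_1$ with $F$ come from the edges $ua$ and $ua'$, each of which pierces $D^+$ near its planar endpoint precisely when that endpoint lies in $D$. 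Because $C_1$ enters $u$ along one of these two edges and leaves along the other, their signed contributions are $-1$ and $+1$. By the previous paragraph $a$ and $a'$ lie on the same side of $C_2$, so either both contribute (and cancel) or neither contributes, giving $lk(C_1, C_2) = 0$. The hard part of the argument is thus case (D): $C_1$ straddles both apices so no single cone spans it, and the separation hypothesis on $C_2$ is precisely what forces the two $u$-edge contributions to cancel.
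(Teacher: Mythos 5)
Your case decomposition (A)--(D) matches the paper's (its Case 1 covers your (A)--(C), its Case 2 is your (D)), your cone disks in (A)--(C) are exactly the paper's panels, and your observation that the separation hypothesis forces all of $a,a',b,b'$ into the single component $X$ is precisely the key step of the paper's Case 2. The linking-number computation itself is also correct as far as it goes.

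The gap is the final inference in case (D): showing $\mathrm{lk}(C_1,C_2)=0$ does not show that $C_1\cup C_2$ is a trivial link, and "linkless embedding'' here means that every pair of disjoint cycles forms a \emph{trivial} (split, unknotted) link, not merely a pair with vanishing linking number --- the Whitehead link has linking number zero but is not trivial. Your opening sentence acknowledges that disjoint spanning disks are what is really needed, but you never produce one in the crucial case (D). The fix is short and you already have the needed ingredient: since every point of $C_1\cap(\mathbb{R}^2\times\{0\})$ lies in $X$, one can either (i) build an explicit disk for $C_1$ inside $X$ together with the two cones from $u$ and $v$ (this is what the paper does, splitting into the subcases $a=b$, $a=b'$ degenerate or not to get a simple closed curve $\beta\cup\delta\cup P_2$ bounding a planar disk in $X$), or (ii) observe that the closure of the component of $\mathbb{R}^2\setminus C_2$ \emph{other} than $X$ (capped off at infinity if it is the unbounded one) is a disk bounded by $C_2$ disjoint from $C_1$, which splits the link, and both components are unknotted. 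Alternatively, an all-linking-numbers-zero embedding does certify that $G$ has no Petersen-family minor (Conway--Gordon/Sachs) and hence is linklessly embeddable, which is all that Proposition~\ref{Gnil} uses --- but that proves a weaker statement than the lemma as written and requires invoking that machinery explicitly, which you do not do.
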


\begin{proof}
Let $\G$ denote the embedding of $G$ as described in the lemma,
and let $K \cup K'$ be a 2-component link in $\G$.
We consider two cases.

\textit{Case 1.} 
Neither $K$ nor $K'$ contains both $u$ and $v$.
Then we have three subcases:
zero, one, or both of $K$ and $K'$ are in $\Sigma$.
In each of these three subcases it is easy to see that
$K \cup K'$ is a trivial link. 
We prove this for one  of the three subcases here;
the other two are similar and easier.
Suppose $K$ contains $u$ but not $v$,
and $K' \subset \Sigma$.
Then $K$ consists of two edges incident to $u$
and a path $P \subset \Sigma$.
Connecting $u$ with straight line segments to every point in $P$
gives us a $\G$-panel for $K$.
On the other hand, $K'$ bounds a disk $D$ in $\R^2$.
We isotop $D$, while keeping its boundary fixed,
 by pushing its interior slightly below $\R^2$,
to make it disjoint from $K$
(since $K$ contains no points below $\R^2$).
It follows that $K \cup K'$ is a trivial link.

\textit{Case 2.}
One of the link's components, say $K$, contains both $u$ and $v$.
Then $K' \subset \Sigma$.
So $\R^2 \setminus K'$ has two components
such that one of them, $X$, separates $u$ and $v$.
Therefore all vertices of $K$ except $u$ and $v$ lie in $X$.
Now, $K$ has exactly two vertices, call them $a,b$, that are adjacent to $u$,
and two vertices, $c, d$, adjacent to $v$.
Note that $\{a,b\}$ is not necessarily disjoint, or even distinct, from $\{c,d\}$.
Furthermore, $K \cap X$ consists of two components, $P_1$ and $P_2$,
each of which is a path of length zero or greater.
We can assume $a, c \in P_1$ and $b,d \in P_2$.
We consider three subcases.

\begin{figure}[htpb!]
\begin{center}
\begin{picture}(440, 189)
\put(0,0){\includegraphics[width=6in]{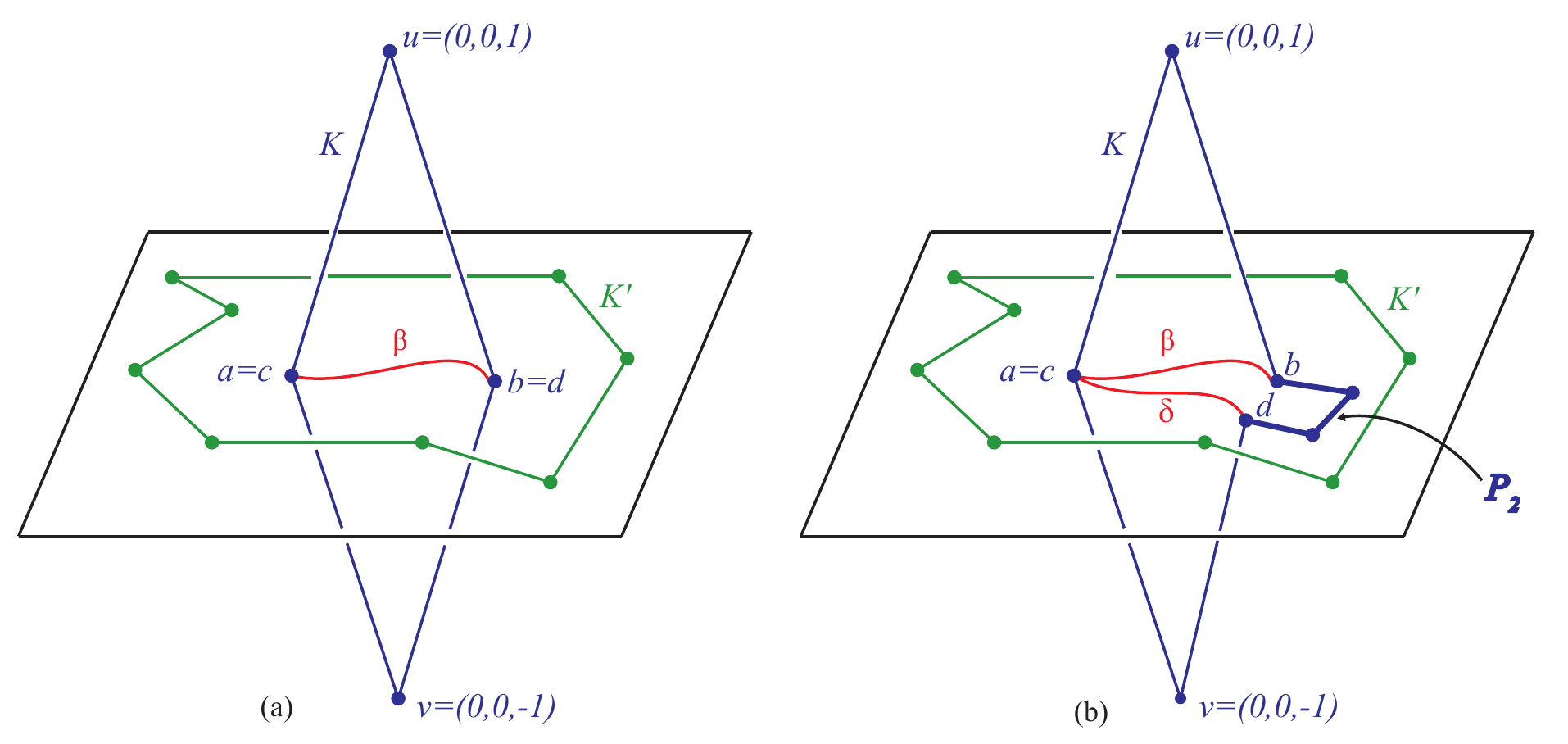}}
\put(423,125){$\mathbb{R}^2$}
\put(206,125){$\mathbb{R}^2$}
\end{picture}
\caption{\small (a) Configuration for Case 2.1 ; (b) Configuration for Case 2.2} 
\label{fig-almost-nonsepai}
\end{center}
\end{figure}

\textit{Case 2.1.}
$a=c$, $b=d$.
Join $a$ to $b$
by an arc $\beta \subset X$ (not necessarily in $\Sigma$),
and then connect each of $u$ and $v$
by straight line segments to every point in $\beta$.
See Figure~\ref{fig-almost-nonsepai}(a).
This gives us a disk bounded by $K$ and disjoint from $K'$.
Similarly to Case~1 above, $K'$ also bounds a disk disjoint from $K$.
Hence $K \cup K'$ is a trivial link.

\textit{Case 2.2.}
$a=c$, $b \ne d$.
Join $a$ to each of $b$ and $d$
by disjoint arcs $\beta$ and $\delta$ respectively, both in $X$,
such that $\beta \cup \delta \cup P_2$ is a simple closed curve.
See Figure~\ref{Fig1025}(b).
 Connect each of $u$ and $v$ by straight line segments 
to every point in $\beta$ and $\delta$ respectively.
This gives us two disks whose union with 
the disk bounded by $\beta \cup \delta \cup P_2$ in $X$
is a disk bounded by $K$ and disjoint from $K'$.
And, as before, $K'$ bounds a disk disjoint from $K$.
Hence $K \cup K'$ is a trivial link.

\textit{Case 2.3.}
$a \ne c$, $b \ne d$.
This case is similar to Case~2.2, except that
we join $a$ to $b$ and $c$ to $d$
by disjoint arcs $\beta$ and $\delta$ in $X$
such that $\beta \cup \delta \cup P_1  \cup P_2$ 
is a simple closed curve.

\end{proof}

\subsection{The $3n-5$ family} We construct a family of graphs with $n$ vertices and $3n-5$ edges, for $n\ge 10$.
This family is obtained from the graph $G$ pictured in Figure~\ref{Fig1025}(a) through a sequence of subdivisions and edge additions.
The graph $G$  is obtained from the  J{\o}rgensen graph by splitting (the opposite of contracting edges) the vertices  $a$ and $b$  into the edges $ad$ and $bc$. 
See Figures~\ref{FigJ}(a) and~\ref{Fig1025}(a).
With the notation in Figure~\ref{Fig1025}(a), construct the graph $G_1$ by subdividing the edge $xy$ with a new vertex $z_1$, then adding edges $z_1u$ and $z_1v$.
Construct  graphs $G_i$, for $i\ge 2$, as follows:  subdivide the edge $z_{i-1}y$ of $G_{i-1}$ with a new vertex $z_i$, then add edges $z_iu$ and $z_iv$ to $G_{i-1}$.  
Notice that $G_i$ has one more vertex and three more edges than $G_{i-1}$.
The graph $G_i$ has $10+i$ vertices and $25+3i= 3(10+i) -5$ edges.
We note that the graphs $G_i$  can also be
obtained by successive splittings of the vertex $y$ 
 into the edge $y z_i$.

 
\begin{figure}[htpb!]
\begin{center}
\begin{picture}(440, 170)
\put(0,0){\includegraphics[width=6in]{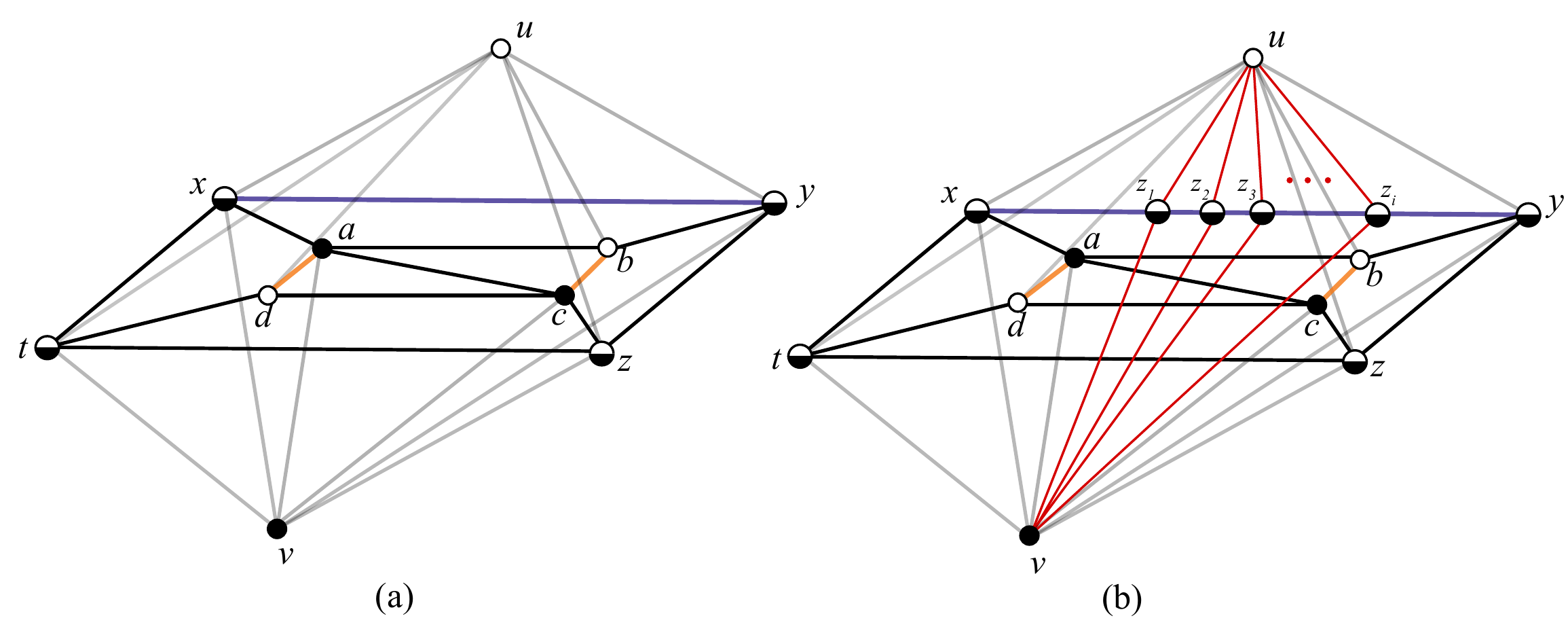}}
\end{picture}
\caption{\small (a) The graph $G$ is maxnil with 10 vertices and 25 edges; (b) The graph $G_i$ is obtained through $i$ edge subdivisions and edge additions.}
\label{Fig1025}
\end{center}
\end{figure}

\begin{proposition} The graphs $G$ and $G_i$ in Figures~\ref{Fig1025}(a)  and~\ref{Fig1025}(b) are linklessly embeddable.
\label{Gnil}
\end{proposition}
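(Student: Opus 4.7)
My approach is to apply Lemma~\ref{lemma-almost-non-separating} directly, with the planar drawing of $G-\{u,v\}$ (resp.\ $G_i-\{u,v\}$) in Figure~\ref{Fig1025} playing the role of the embedding $\Sigma$. The entire task reduces to verifying the cycle-separation hypothesis: for every cycle $C$ in $\Sigma$, identify a component $X$ of $\R^2\setminus C$ such that $X\cup C$ meets every $u$-$v$ path in the full graph.

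For the base graph $G$, I would exploit the fact that $G-\{u,v\}$ is obtained from the planar apex-complement $P$ of the J{\o}rgensen graph (which, by the observation preceding Lemma~\ref{lemma-almost-non-separating}, has no separating cycles in its depicted embedding) by splitting the vertex $a$ into the edge $ad$ and the vertex $b$ into the edge $bc$. Cycles of $G-\{u,v\}$ that avoid both new edges correspond to cycles of $P$, and hence have a vertex-free complementary region; taking $X$ to be the other region makes $X\cup C$ contain all of $V(G)\setminus\{u,v\}$, which trivially separates $u$ and $v$ in $G$. The cycles that do use $ad$ or $bc$ form a short finite list to be checked directly from Figure~\ref{Fig1025}(a): for each such $C$, I would identify a complementary region $X$ such that every neighbor of $u$ lying outside $X\cup C$ is confined to a portion of $\R^2\setminus C$ from which no neighbor of $v$ is reachable without crossing $X\cup C$.

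The passage to $G_i$ is structural. Since $G_i-\{u,v\}$ is obtained from $G-\{u,v\}$ by subdividing the edge $xy$ with degree-two vertices $z_1,\ldots,z_i$, every cycle of $G_i-\{u,v\}$ either traverses the entire subdivided path $xz_1\cdots z_i y$ or avoids all $z_j$, giving a natural bijection with cycles of $G-\{u,v\}$. In the first case each $z_j$ lies on $C$, so the length-two $u$-$v$ paths through each $z_j$ meet $C$ immediately. In the second case the $z_j$ all lie in the same component of $\R^2\setminus C$ as the drawn edge $xy$, so they can be absorbed into the region $X$ already chosen for the corresponding cycle of $G-\{u,v\}$ (swapping sides if necessary). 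In either situation the separation property is inherited from the $G$ case, and Lemma~\ref{lemma-almost-non-separating} yields a linkless embedding.

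The main obstacle is the finite case analysis for cycles of $G$ that traverse $ad$ or $bc$: the clean vertex-free-side property of $P$ fails there, and one must verify by inspection of Figure~\ref{Fig1025}(a) that the split-off neighbors of $u$ and $v$ are distributed between the two sides of $C$ in a way compatible with the existence of a separating $X$. All other steps are routine once Lemma~\ref{lemma-almost-non-separating} is in hand.
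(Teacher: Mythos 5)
Your proposal is correct and follows the same route as the paper: the paper's entire proof of Proposition~\ref{Gnil} is to observe that $G$ and $G_i$ satisfy the hypotheses of Lemma~\ref{lemma-almost-non-separating}, leaving the verification as a straightforward check. Your write-up simply makes that check explicit (vertex-free-side cycles, the finite list of cycles through $ad$ or $bc$, and the subdivision argument for $G_i$), so it is a more detailed account of the same argument.
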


\begin{proof}
It is straightforward to check that 
these graphs satisfy the hypotheses of Lemma~\ref{lemma-almost-non-separating}
and hence are nIL.
\end{proof}\

\begin{proposition} The graph $G$ in Figure~\ref{Fig1025}(a) is maxnil.
\label{Gmaxnil}
\end{proposition}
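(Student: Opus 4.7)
The plan is to check, for each of the $20$ non-edges $e$ of $G$ (since $|E(G)|=25$ and $\binom{10}{2}=45$), that $G+e$ contains a minor in the Petersen family, hence is IL. I would first collapse these $20$ cases into a handful of representatives using the automorphisms of $G$ visible in Figure~\ref{Fig1025}(a), most notably the swap $u\leftrightarrow v$ and the reflection that exchanges the split pair $\{a,d\}$ with $\{b,c\}$ (together with the induced swap on the remaining planar vertices).

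The main tool is the observation that contracting the two split edges $ad$ and $bc$ in $G$ recovers the J{\o}rgensen graph $J$, which is maxnil by \cite{J}. Whenever a non-edge $e=xy$ of $G$ has an image $e'$ in $J$ under this contraction that is still a non-edge of $J$, the graph $J+e'$ is IL by maxnil-ness of $J$; since $G+e$ has $J+e'$ as a minor, and since IL is inherited by supergraphs of uncontractions, $G+e$ is IL as well. This disposes of every non-edge $e$ with $\{x,y\}\cap\{a,b,c,d\}=\emptyset$, together with every mixed non-edge whose sole endpoint in the split region is non-adjacent in $J$ to the other endpoint.

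The residual non-edges are those whose contraction collapses to an already-existing edge of $J$: the non-edges $ab$, $cd$, $ac$, $bd$, and non-edges of the form $ax$, $dx$, $by$, or $cy$ whose corresponding $J$-edge was assigned to the other split half. For each residual $e$, I would exhibit a Petersen family minor in $G+e$ directly, typically a $K_6$ minor, by specifying six vertex-disjoint connected branch sets and verifying pairwise adjacency. The two apex vertices $u,v$, each adjacent to many vertices of the planar part, provide enough connectivity to make these branch sets easy to write down.

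The main obstacle will be the bookkeeping of the residual cases; I expect the trickiest non-edge to be $ab$ (and symmetrically $cd$), since both endpoints sit inside the split region and the $J$-contraction gives no information. For $ab$, I would construct a $K_6$ minor by contracting one or two short paths in the planar part away from $\{a,b,c,d\}$ to merge chosen pairs of vertices, and then pair the resulting branch sets with $u$ and $v$, producing six pairwise adjacent sets in $G+ab$. Similar ad hoc branch-set constructions should settle the remaining residual non-edges, completing the proof.
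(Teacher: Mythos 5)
Your strategy is essentially the paper's: contract the split edges $ad$ and $bc$ to recover the J{\o}rgensen graph $J$, invoke maxnil-ness of $J$ for every added edge whose image is still a non-edge of $J$, and produce explicit Petersen-family ($K_6$) minors for the residual non-edges that collapse onto existing edges of $J$. The paper identifies the residual cases concretely as the single planar non-edge $bd$ plus the non-edges joining $u$ (resp.\ $v$) to the planar part, and disposes of each by naming four specific edge contractions that yield a $K_6$ minor; that finite verification is the one step your outline defers rather than executes, and it is where the real content of those cases lies.
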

\begin{proof} Since $G$ is linklessly embeddable, it remains to show that adding any edge to $G$ gives an IL graph.
We note that by contracting the edges $ad$ and $bc$, we obtain the  J{\o}rgensen graph as a minor of $G$.
If an edge $e$ other than $bd$ is added to $G-\{u,v\}$, such edge  is preserved by these two  edge contractions. 
Thus $G+e$ contains a minor that itself contains the J{\o}rgensen graph added an edge. 
Since the J{\o}rgensen graph is maxnil, $G+e$ is IL.
Same holds is $e=uv$ is added to $G$.
If the edge $bd$ is added, then contracting the edges $dt$, $cz$, $ux$ and $vy$  creates a $K_6$ minor of $G+bd$.
If an edge from $u$ to $G-\{u,v\}$ is added, say $ua$, then contracting the edges $cd$, $dt$, $by$ and $uz$ creates a $K_6$ minor of $G+ua$.
If an edge from $v$ to $G-\{u,v\}$ is added, say $vb$, then contracting the edges $ax$, $cz$, $du$ and $dt$ creates a $K_6$ minor of $G+vb$.
\end{proof}

\begin{proposition} All graphs $G_i$, $i\ge 1$ are maxnil.
\end{proposition}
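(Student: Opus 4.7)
We argue by induction on $i$. Each $G_i$ is nIL by Proposition~\ref{Gnil}, so only the maximality claim requires work: for every $e\notin E(G_i)$ we must show $G_i+e$ is IL.

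For the base case $i=1$, we would mirror the proof of Proposition~\ref{Gmaxnil}. For each non-edge $e$ of $G_1$, either exhibit a $K_6$ minor of $G_1+e$ directly, or contract one of the edges $xz_1$ or $z_1y$ to identify $z_1$ with $x$ or $y$, reducing the question to an edge addition to the maxnil graph $G$ already handled in Proposition~\ref{Gmaxnil}.

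For the inductive step, assume $G_{i-1}$ is maxnil and let $e\notin E(G_i)$. The key observation is that $z_{i-1}$ and $z_i$ share $\{u,v\}$ as neighbors, so contracting the edge $z_{i-1}z_i$ merges them into a vertex with neighborhood $\{u,v,z_{i-2},y\}$ (with the convention $z_0:=x$); this is exactly the neighborhood of $z_{i-1}$ in $G_{i-1}$. Hence contracting $z_{i-1}z_i$ sends $G_i$ to $G_{i-1}$. If $e$ is disjoint from $\{z_{i-1},z_i\}$, this contraction sends $G_i+e$ to $G_{i-1}+e$, which is IL by the inductive hypothesis. If $e$ has exactly one endpoint in $\{z_{i-1},z_i\}$, the contraction turns $e$ into an edge $e'$ joining the merged vertex to the remaining endpoint of $e$; in most subcases $e'$ is a genuine non-edge of $G_{i-1}$ and the inductive hypothesis again applies.

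The delicate cases are those in which the natural contraction $z_{i-1}z_i$ sends $e$ onto an edge already present in $G_{i-1}$ and so produces no new edge. Representatives are $e=z_{i-1}y$ and $e=z_iz_{i-2}$. For $e=z_{i-1}y$ with $i\ge 3$ we contract $z_{i-2}z_{i-1}$ instead, obtaining $G_{i-1}+z_{i-2}y$; for the boundary case $i=2$ we contract $xz_1$ and reach $G_1+xy$. For $e=z_iz_{i-2}$ we contract $z_iy$, obtaining $G_{i-1}+z_{i-2}y$. These alternative contractions implicitly use that both $u$ and $v$ are adjacent to $y$ in $G$; each of these facts follows from $G$ being maxnil, since otherwise $G_1$ would contract onto $G+uy$ or $G+vy$ and would itself be IL, contradicting Proposition~\ref{Gnil}. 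The main obstacle is bookkeeping: one must verify that every non-edge of $G_i$ near the ends $x,y$ of the subdivided path, and every pair $z_jz_k$ at small distance along it, is captured by one of the contraction schemes above, and that each such scheme produces a genuinely new edge of $G_{i-1}$ rather than a duplicate.
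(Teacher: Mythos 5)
Your strategy (induct on $i$ and contract a single subdivision edge $z_{i-1}z_i$ to fall back on $G_{i-1}$) is different from the paper's, and as written it has a genuine gap: the case analysis is not actually carried out. The base case is only described (``we would mirror the proof\dots either exhibit a $K_6$ minor directly, or contract\dots''), and it contains non-edges for which neither of your proposed contractions works. For example, for $e=xy$ in $G_1$, contracting $xz_1$ or $z_1y$ produces no new edge, since the merged vertex is already adjacent to the other endpoint; similarly, for $e=z_1w$ where $w$ is a vertex of $G-\{u,v\}$ adjacent to both $x$ and $y$, both contractions land on existing edges. These are exactly the cases where you would need to ``exhibit a $K_6$ minor directly,'' and you never do. The same problem recurs in the inductive step, where you concede that ``in most subcases'' the contraction gives a genuine non-edge and that ``one must verify'' the remaining bookkeeping; that verification is the content of the proof, not an afterthought.

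The paper's proof sidesteps all of this with two global contractions rather than a local one. For a non-edge $e\ne xy$ disjoint from $\{z_1,\dots,z_i\}$, it contracts the entire path $xz_1\cdots z_i$, so $G_i+e$ has $G+e$ as a minor and Proposition~\ref{Gmaxnil} applies. For $e=xy$ or $e$ meeting $\{z_1,\dots,z_i\}$ — precisely your delicate cases — it contracts the edges $ad$ and $bc$, turning $G_i+e$ into a graph containing $J_i+e$ as a minor, and then invokes J{\o}rgensen's theorem that $J_i$ is maxnil. That second reduction, to the already-known maxnil family $J_i$, is the idea your argument is missing; without it (or without explicitly producing the Petersen-family minors in each residual case) your induction does not close.
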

\begin{proof} Since $G_i$ is linklessly embeddable, it remains to show that adding any edge to $G_i$ gives an IL graph.
Adding any edge $e$ different from $xy$ and disjoint from $\{z_1, z_2, \ldots, z_i\}$ to $G_i$ gives a graph $G_i+e$ that contains $G+e$ as a minor (obtained by contracting the path $xz_1z_2...z_i$). 
Since $G$ is maxnil, $G+e$ is IL and so is $G_i+e$.
Adding an edge $e$ that is either $xy$ or has at least one endpoint in $\{z_1, z_2, \ldots, z_i\}$ to $G_i $, gives a graph $G_i+e$ that contains $J_i+e$ as a minor (obtained by contracting the edges $ad$ and $bc$). 
Since $J_i$ is maxnil, $J_i+e$ is IL and so is $G_i+e$.

\end{proof}

\subsection{The $Q(13,3)$ family} A graph $G$ is called \textit{triangular} if each edge of $G$ belongs to at least a triangle. 
In a non-triangular graph, an edge that is not part of a triangle is a \textit{non-triangular edge.}
In Section 3, we study the properties of maxnil graphs under the operation of clique sum (defined in Section 3).
For the  construction presented in the next theorem we use the result of Lemma~\ref{lemmajoin2} about clique sums of maxnil graphs over $K_2$.

\begin{theorem}
For each $n\ge 13$, there exists a  maxnil graph $G$ with $n$ vertices and $m< \frac{25 n}{12} - \frac{1}{4}$ edges.
\label{main}
\end{theorem}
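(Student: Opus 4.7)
I would build a family of maxnil graphs by iterated $K_2$-clique sums of Maharry's graph $Q(13,3)$ (with $13$ vertices and $26$ edges, ratio exactly $2 < 25/12$), and then fill in the remaining values of $n$ by small auxiliary modifications.

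First, for the base case $n=13$ I take $G = Q(13,3)$ itself and verify directly that $26 < \frac{325}{12} - \frac{1}{4} \approx 26.83$. For larger $n$, I define a recursive family: set $G_1 = Q(13,3)$, and let $G_{k+1}$ be the $K_2$-clique sum of $G_k$ with a fresh copy of $Q(13,3)$ glued along a carefully chosen non-triangular edge. Applying Lemma~\ref{lemmajoin2} at each step shows that each $G_k$ is maxnil; in particular I must check that $Q(13,3)$ has a non-triangular edge satisfying the hypotheses of that lemma, and that such an edge persists in $G_k$ at every stage.

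Next, I track $|V(G_k)|$ and $|E(G_k)|$ as linear functions of $k$ and verify the strict inequality $12m < 25n - 3$. The algebraic content of this step is that $Q(13,3)$ already has slack $25 \cdot 13 - 12 \cdot 26 = 13$ in the target inequality, and each clique-sum step is arranged (possibly by deleting the identified edge when Lemma~\ref{lemmajoin2} permits, or combining with small maxnil-preserving moves analogous to those used in the $3n-5$ construction) to maintain enough slack to propagate the inequality inductively in $k$. To cover every $n \geq 13$ rather than only the values arising in the primary sequence, I would supplement the construction by clique-summing with small padding maxnil graphs (or by localized vertex-split / edge-subdivision moves) so as to fill every residue class modulo the step size of the recursion.

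\textit{Main obstacle.} The delicate part is twofold: (a) verifying that the non-triangular edges chosen at each stage satisfy the hypotheses of Lemma~\ref{lemmajoin2} (so that maxnil is preserved under the sum, and so that a usable non-triangular edge remains in the resulting graph for the next iteration), and (b) ensuring that the combined primary and padding constructions never consume the accumulated slack, so that the strict inequality $m < \frac{25n}{12} - \frac{1}{4}$ holds uniformly for every $n \geq 13$ and not merely asymptotically.
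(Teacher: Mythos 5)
Your ingredients are the right ones --- a chain of copies of $Q_{13,3}$ glued over $K_2$, Lemma~\ref{lemmajoin2} to certify maximality, and degree-$2$ padding vertices --- and these are exactly what the paper uses. But the quantitative structure of your plan has a genuine gap: you treat the chain as the primary family (step size $11$ in $n$) and use padding only to fill residue classes, and that family violates the inequality almost immediately. The chain of $k$ copies has $11k+2$ vertices and $25k+1$ edges, so each new copy changes $25n-12m$ by $25\cdot 11-12\cdot 25=-25$; already for $k=2$ you get $24$ vertices and $51$ edges, while $\frac{25\cdot 24}{12}-\frac{1}{4}=49.75<51$. Adding at most $10$ padding vertices to fill the residues mod $11$ (each contributing only $+1$ to $25n-12m$) cannot recover a deficit that grows by $25$ per copy. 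Moreover, ``deleting the identified edge'' is not an available move: Lemma~\ref{lemmajoin2} licenses no such deletion, and the result would not be maxnil, so that escape hatch does not exist.

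The fix, which is the actual content of the paper's proof, is to invert the roles: the degree-$2$ padding is the primary source of vertices, and a new copy of $Q_{13,3}$ is spliced in only when forced. Each padding vertex is a clique sum with a triangle over a non-triangular edge, and after that sum the edge becomes triangular, so by Lemma~\ref{lemmajoin2} each edge can host at most one padding vertex. The chain $H_k$ is triangle-free with $25k+1$ edges, so it supports at most $25k+1$ padding vertices, i.e.\ at most $36k+3$ vertices in total; hence one takes $k=\lceil\frac{n-3}{36}\rceil$ (not $k\approx\frac{n-11}{11}$) and pads $H_k$ with $n-(11k+2)$ degree-$2$ vertices, giving $m=2n+3k-3<\frac{25n}{12}-\frac{1}{4}$. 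Without identifying this trade-off --- each chained copy costs $25$ units of slack, each padding vertex gains $1$, and each edge can be padded only once --- your induction on $k$ cannot close.
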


\begin{proof}
The construction is based on the maxnil graph $Q_{13,3}$ described by Maharry \cite{M}. See Figure~\ref{Q133}(a).
This graph has 13 vertices and 26 edges, and it is triangle free.

For each $n$ with $13\le n\le 39$,  we construct a set of maxnil graphs with $n$ vertices and $2n$ edges by
adding $n-13$ new vertices, and then
choosing $n-13$ edges in $Q_{13,3}$
and connecting the two endpoints of each of them
to one of the new vertices.
Equivalently, we are taking the clique sum
of $Q_{13,3}$ with $n-13$ disjoint triangles,
over $n-13$ copies of $K_2$.
See Figure~\ref{Q133}(b).
By Lemma \ref{lemmajoin2}, the resulting graph is maxnil.

\begin{figure}[htpb!]
\begin{center}
\begin{picture}(330, 155)
\put(0,0){\includegraphics[width=4.8in]{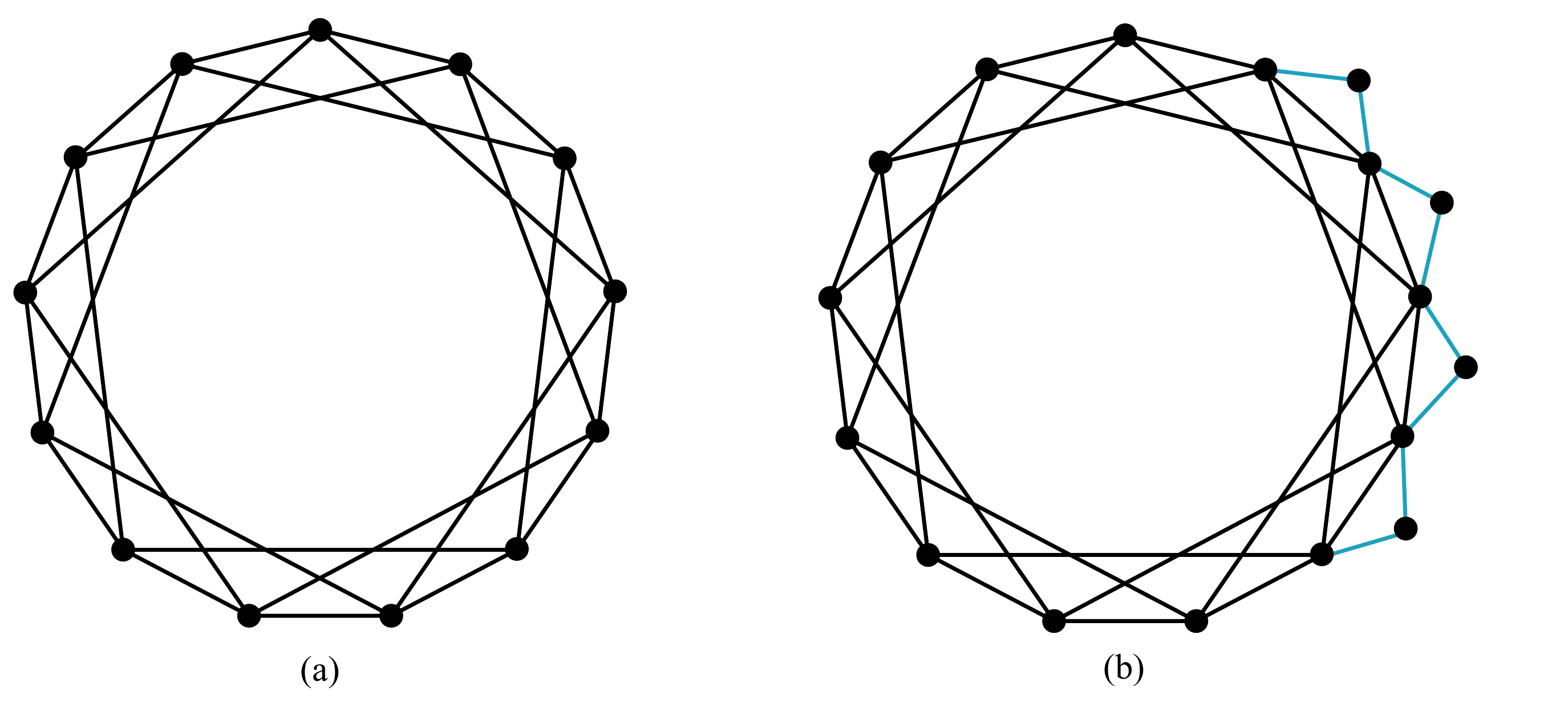}}
\end{picture}
\caption{\small (a) $Q_{13,3}$ is a maxnil graph with 13 vertices and 26 edges; (b) A maxnil graph with 17 vertices and 34 edges obtained from  $Q_{13,3}$ by adding four vertices of degree 2 and eight edges.}
\label{Q133}
\end{center}
\end{figure}

The graph on 39 vertices obtained this way is triangular, so the construction cannot proceed further. 
To build graphs with a larger number of vertices we use multiple copies of $Q_{13,3}$ joined along an edge (clique sum over $K_2$). 
Consider  $k\ge 1$ copies of $Q_{13,3}$ and choose one edge in each copy. 
Then join the $k$ graphs  together by identifying the
$k$ chosen edges into one edge.
This graph, which we denote by $H_k$, is maxnil  (by repeated application of Lemma~\ref{lemmajoin2}) and has $11k+2$ vertices and $25k+1$ edges. 
All edges of $H_k$ are non-triangular and adding vertices of degree 2 (as above) along any subset of the edges of $H_k$ gives a maxnil graph.

For $ n \ge 13$, let $k=\lceil \frac{n-3}{36}\rceil$ and 
add $n-(11k+2)$ vertices of degree 2 along any $n-(11k+2)$ edges of $H_k$.
With  every added vertex of degree 2, the number of edges is increased by 2.
This gives a maxnil graph with $n$ vertices and  $m=(25k+1)+2[n-(11k+2)] = 2n+3k-3$ edges.
Moreover,
$$m=2n+ 3 \lceil \frac{n-3}{36}\rceil-3 <2n +3 ( \frac{n-3}{36}+1)-3 =\frac{25n}{12}-\frac{1}{4}.$$
\end{proof}
\begin{remark}

The above shows there exist maxnil graphs of arbitrarily large order $n$
with an edge-to-vertex ratio of less than $\frac{25}{12} - \frac{1}{4n}$.
Whether this  edge-to-vertex ratio can be lowered further is an open question.  

\end{remark}

\section{Clique Sums of  Maxnil Graphs}

In this section we study the properties of maxnil graphs under taking clique sums. 
A set  $S \subset V(G)$ is a \dfn{vertex cut set} of a connected graph $G$ if  $G-S$ is disconnected.
 We say a vertex cut set $S \subset V(G)$ is \dfn{minimal} if no proper subset of $S$ is a vertex cut set of $G$.
A graph $G$ is the \textit{clique sum} of $G_1$ and $G_2$ over $K_t$ if $V(G)=V(G_1)\cup V(G_2)$, $E(G)=E(G_1)\cup E(G_2)$, and the subgraphs induced by $V(G_1)\cap V(G_2)$ in both $G_1$ and $G_2$ are complete of order $t$. 
Since the vertices of the clique over which a clique sum is taken is a vertex cut set in the resulting graph, the vertex connectivity of a clique sum over $K_t$ is at most $t$.
 For a set of vertices $\{v_1, v_2, \ldots, v_k\} \subset V(G)$, $\big< v_1, v_2, \ldots, v_k\big>_G$ denotes the subgraph of $G$  induced by this set of vertices.
 By abuse of notation, the subgraph induced in $G$ by the union of the vertices of subgraphs $H_1, H_2, \ldots, H_k$ is denoted by $\big< H_1,H_2, \ldots, H_k\big>_G$.
 
Holst, Lov\'asz, and Schrijver \cite{HLS} studied the behavior of the Colin de Verdi\'ere $\mu-$invariant for graphs under clique sums (Theorem 2.10).
Since a graph $G$ is nIL if and only if $\mu(G)\le 4$ (\cite{LS}, \cite{RST2}), their theorem implies the following.
\begin{theorem}[Holst, Lov\'asz, and Schrijver \cite{HLS}]If $G$ is the clique sum over $S$ of two nIL graphs, then $G$ is IL if and only if one can contract two or three components of $G-S$ so that the contracted nodes together with $S$ form a $K_7$ minus a triangle.
\label{ThmHLS}
\end{theorem}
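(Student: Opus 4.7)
The plan is to derive this statement from Theorem 2.10 of \cite{HLS} together with the characterization $G$ is nIL $\iff\mu(G)\le 4$ (\cite{LS,RST2}).

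The direction ($\Leftarrow$) is immediate. If two or three components of $G-S$ can be contracted so that the contracted vertices together with $S$ induce $K_7$ minus a triangle, then $K_7-\Delta$ is a minor of $G$. Deleting three edges inside the $K_4$ portion of $K_7-\Delta$ exhibits $K_{3,3,1}$ as a subgraph, and $K_{3,3,1}$ lies in the Petersen family. Hence $G$ has a Petersen family minor and is IL by \cite{RST}.

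For ($\Rightarrow$), suppose $G$ is IL, so $\mu(G)\ge 5$, while $\mu(G_1),\mu(G_2)\le 4$ because $G_1$ and $G_2$ are nIL. Theorem 2.10 of \cite{HLS} describes $\mu$ of a clique sum: it equals $\max(\mu(G_1),\mu(G_2))$ except when an explicit exceptional minor-configuration on the components of $G-S$ is present, in which case $\mu$ exceeds this maximum by one. Since $\mu(G)>\max(\mu(G_1),\mu(G_2))$, the exceptional configuration must occur, and the plan is to translate its combinatorial description into the stated contraction statement.

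The main obstacle is precisely that translation. A useful reduction is that $G_1$ and $G_2$ being nIL forces $|S|\le 5$ (otherwise $S$ itself contains $K_6$), and since contracting $k$ components of $G-S$ produces $|S|+k$ vertices, matching the $7$ vertices of $K_7-\Delta$ forces $(|S|,k)\in\{(4,3),(5,2)\}$. In the first case the three contracted vertices play the role of the stable triple of the missing triangle and the four vertices of $S$ form the $K_4$ automatically. In the second case the stable triple consists of the two contracted vertices together with a vertex of $S$ having no neighbor in either contracted component, while the other four vertices of $S$ again form $K_4$. Checking that HLS's exceptional configuration is equivalent to exactly one of these two pictures then finishes the proof.
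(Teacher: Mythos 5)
Your approach is essentially the paper's: the paper gives no proof of this statement, presenting it purely as a specialization of Theorem 2.10 of \cite{HLS} via the equivalence ``$G$ is nIL $\iff \mu(G)\le 4$'' from \cite{LS,RST2}, which is exactly the reduction you set up for the forward direction; your backward direction ($K_7$ minus a triangle has $K_{3,3,1}$, a Petersen family graph, as a subgraph) is correct and is in fact more self-contained than anything the paper offers. The one point worth noting is that the ``translation'' you flag as the main obstacle is not really an obstacle: Theorem 2.10 of \cite{HLS} already states its exceptional case in precisely the form ``one can contract two or three components of $G-S$ so that the contracted nodes together with $S$ form a complete graph minus the edges of a triangle,'' with the order of that complete graph determined by $\mu$, so substituting the nIL bound (forcing $\mu(G)=5$ in the exceptional case) yields $K_7$ minus a triangle verbatim; your $(|S|,k)\in\{(4,3),(5,2)\}$ analysis is a sound consistency check but is not a step that needs to be supplied.
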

Theorem~\ref{ThmHLS} implies that for $t\le 3$, the clique sum over $K_t$ of nIL graphs is nIL.
While Theorem~\ref{ThmHLS} shows when a clique sum is nIL,  it does not establish when a clique sum of maxnil graphs is maxnil. 

 \begin{lemma} Any maxnil graph is 2-connected.
\label{lemma-2-connected}
\end{lemma}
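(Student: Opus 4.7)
The plan is to show separately that $G$ must be connected and that $G$ has no cut vertex; in each case the contradiction comes from exhibiting a non-edge $e$ such that $G + e$ decomposes as a clique sum over $K_t$ with $t \le 3$ of nIL graphs, and hence $G + e$ is nIL by the observation following Theorem~\ref{ThmHLS}, contradicting the maximality of $G$.

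For connectivity, if $G$ has two components $C_1, C_2$ and $u \in C_1, v \in C_2$, I would express $G + uv$ as the $K_2$-clique sum along the edge $uv$ of $A' = C_1$ with $v$ attached as a pendant to $u$, and $B' = G - C_1$ with $u$ attached as a pendant to $v$. Pendant vertices cannot lie on any cycle, so both $A'$ and $B'$ are nIL, and Theorem~\ref{ThmHLS} gives that $G + uv$ is nIL.

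For the cut-vertex case, suppose $v$ is a cut vertex and $H_1, H_2$ are two components of $G - v$. I would choose $x \in H_1$ and $y \in H_2$ both adjacent to $v$; such vertices exist because, by the connectedness of $G$, the cut vertex $v$ has at least one neighbor in each component of $G - v$. Writing $A = G[H_1 \cup \{v\}]$ and $B = G[V(G) \setminus H_1]$, I set $A' = A + y + vy + xy$ and $B' = B + x + vx + xy$. Then $\{v, x, y\}$ induces a $K_3$ in both $A'$ and $B'$, so $G + xy = A' \cup B'$ is a clique sum over this $K_3$. Each of $A', B'$ is itself a $K_2$-clique sum of a subgraph of $G$ with a triangle over an edge, hence nIL by Theorem~\ref{ThmHLS}; a second application yields that $G + xy$ is nIL.

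The main technical obstacle is in the cut-vertex case, because a naive attempt to view $G + xy$ as a $K_2$-clique sum along $\{x, y\}$ alone fails: the cut vertex $v$ lies in both sides of any such decomposition and its incident edges are not accounted for. Choosing $x, y$ specifically among the neighbors of $v$ introduces the triangle $\{v, x, y\}$ into $G + xy$ and allows the argument to go through as a $K_3$-clique sum, reducing everything to two invocations of the $t \le 3$ case of Theorem~\ref{ThmHLS}.
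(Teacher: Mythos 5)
Your proposal is correct and follows essentially the same strategy as the paper's proof: in both cases the new edge is realized inside a clique sum over $K_1$, $K_2$, or $K_3$ of nIL pieces (with the endpoints chosen among the neighbors of the cut vertex and a triangle inserted as the middle summand), so Theorem~\ref{ThmHLS} forces $G+e$ to be nIL, contradicting maximality. The only differences are cosmetic: the paper handles disconnectedness with two $K_1$-sums through the edge $ab$ and the cut vertex with two $K_2$-sums through the triangle, whereas you use pendant attachments and a final $K_3$-sum, but the decompositions are the same.
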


\begin{proof} Let $G$ be a maxnil graph. If $G$ is disconnected, let $A$ and $B$ denote two of its connected components. Let $a\in V(A)$ and $b\in V(B)$. Then $G+ab$ is a nIL graph, as it can be obtained by performing two consecutive clique sums over $K_1$ of nIL summands, namely 
\[G+ab=A\cup_{\{a\}}ab\cup_{\{b\}}(G-A).\] But this contradicts the maximality of $G$.

If the vertex connectivity of $G$ is one, assume $x\in V(G)$ is a cut vertex, that is $G-x=A\sqcup B$, with $A$ and $B$ nonempty, and no edges between vertices of $A$
and vertices of $B$.  Let $a\in V(A)$ and $b\in V(B)$ be neighbors of $x$ in $G$.
Then $G+ab$ is nIL, as it can be obtained by performing two consecutive clique sums over $K_2$ of nIL summands.
 If $\Delta$ denotes the triangle $axb$, 
\[G+ab=\big<A, x\big>_G\cup_{ax}\Delta\cup_{xb}\big<B, x\big>_G.\] But this contradicts the maximality of $G$.
\end{proof}

\begin{lemma} Let $G$ be a maxnil graph with a vertex cut set $S=\{x,y\}$, and let $G_1,G_2,...,G_r$ denote the connected components of $G-S$. 
Then
$xy\in E(G)$ and
$\big<G_i, S \big>_G$ is maxnil for all $1\le i \le r$.
\label{cliquesum2}
\end{lemma}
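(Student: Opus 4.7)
The plan is to prove both statements by contradiction, in each case exhibiting an edge whose addition leaves $G$ linklessly embeddable. The main tool is Theorem~\ref{ThmHLS}: a clique sum over $K_2$ of nIL graphs is nIL.

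First I would record two preliminary facts. By Lemma~\ref{lemma-2-connected}, $G$ is 2-connected, so $r\ge 2$, and for each $i$ both $x$ and $y$ have a neighbor in $G_i$ (otherwise $\{x\}$ or $\{y\}$ alone would separate $G$). Write $H_i=\langle G_i,S\rangle_G$ for brevity.

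To show $xy\in E(G)$, suppose toward contradiction that $xy\notin E(G)$. Fix $i$. For each $j\ne i$, contract the connected subgraph $G_j$ to a single vertex $v_j$; by the preliminary fact $v_j$ is adjacent in the minor to both $x$ and $y$, so contracting the edge $v_j x$ turns the edge $v_j y$ into an edge $xy$. Performing this for every $j\ne i$ and passing to the underlying simple graph exhibits $H_i+xy$ as a minor of the nIL graph $G$, so $H_i+xy$ is nIL. Since the edge $xy$ is now shared by all the $H_i+xy$, the graph $G+xy$ is precisely the iterated clique sum over $K_2=\{x,y\}$ of $H_1+xy,\dots,H_r+xy$, hence nIL by Theorem~\ref{ThmHLS}; this contradicts the maximality of $G$.

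For the second claim, fix $i$ and suppose that some edge $e\notin E(H_i)$ (necessarily with both endpoints in $V(H_i)$ and distinct from $xy$) makes $H_i+e$ nIL. Since $xy\in E(G)$ by the previous paragraph, we have $xy\in E(H_j)$ for every $j$, and $G+e$ decomposes as the iterated clique sum over $K_2=\{x,y\}$ of $H_i+e$ with the graphs $H_j$ for $j\ne i$. All summands are nIL, so Theorem~\ref{ThmHLS} gives that $G+e$ is nIL, contradicting the maximality of $G$. The only delicate point is the minor argument in the first part: one must verify that contracting each $G_j$ ($j\ne i$) into $x$ yields exactly the simple graph $H_i+xy$, which is fine because the only extra edges produced between $G_i$ and $S$ are parallel copies of $xy$ that collapse to a single edge when passing to the simple minor.
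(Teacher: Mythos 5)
Your proposal is correct and follows essentially the same route as the paper: use 2-connectivity (Lemma~\ref{lemma-2-connected}) to get neighbors of $x,y$ in each component, realize $\big<G_i,S\big>_G+xy$ as a minor by contracting another component onto $x$, and invoke Theorem~\ref{ThmHLS} on the clique-sum decomposition over $\{x,y\}$ to contradict maximality. The only cosmetic differences are that the paper contracts a single $G_j$ rather than all of them, and for the second claim it completes each $\big<G_i,S\big>_G$ to a maxnil graph in one step instead of arguing edge by edge.
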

\begin{proof} 
By Lemma~\ref{lemma-2-connected}, $x$ and $y$ are distinct
and each of them has at least one neighbor in each $G_i$.
Suppose $xy \not \in E(G)$.
Let $G' = G + xy$ and $G'_i = \big<G_i, S\big>_{G'}$.
Then, for every $i$, $G'_i$ is a minor of $G$
since if we pick a $j \ne i$ and
in $\big<G_i, G_j, S \big>_G$ contract $G_j$ to $x$,
we get a graph isomorphic to $G'_i$.
So $G'_i$ is nIL.
Then, by  Theorem~\ref{ThmHLS},
$G' = G'_1 \cup_{xy} \cdots \cup_{xy} G'_r$ is nIL,
contradicting the assumption that $G$ is maxnil.
So $xy \in E(G)$.

For each $i$,
we repeatedly add new edges to $\big<G_i, S \big>_G$, if necessary,
to get a maxnil graph $H_i$.
Then $H := H_1 \cup_{xy} \cdots \cup_{xy} H_r$ is nIL
and contains $G$ as a subgraph,
so $H = G$ and every $\big<G_i, S \big>_G$
is maxnil.
\end{proof}

\begin{lemma} 
Let $G_1$ and $G_2$ be maxnil graphs.
Pick an edge in each $G_i$ and label it $e$.
Then $G=G_1\cup_e G_2$ is maxnil
if and only if
$e$ is non-triangular in at least one $G_i$.
\label{lemmajoin2}
\end{lemma}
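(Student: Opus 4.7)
The plan is to prove both directions by reducing to Theorem~\ref{ThmHLS}.

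For the ``only if'' direction, suppose $e = xy$ is triangular in both summands, with triangle $xyz_i$ in $G_i$. I claim $G + z_1z_2$ is nIL (so $G$ is not maxnil). The key is to rewrite
\[
G + z_1z_2 = G'_1 \cup_{K_4} G'_2,
\]
where $G'_i$ is the subgraph of $G + z_1z_2$ induced on $V(G_i) \cup \{z_{3-i}\}$ and the shared $K_4$ has vertex set $\{x,y,z_1,z_2\}$. Each $G'_i$ is itself a clique sum of the nIL graph $G_i$ with $K_4$ over the triangle $xyz_i$, and is therefore nIL by Theorem~\ref{ThmHLS}. Applying Theorem~\ref{ThmHLS} again to the $K_4$-sum, $G + z_1z_2$ is IL iff one can contract three components of $(G+z_1z_2)-\{x,y,z_1,z_2\}$ so that, together with $\{x,y,z_1,z_2\}$, they form $K_7$ minus a triangle. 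But every such component lies either in $V(G_1)\setminus\{x,y,z_1\}$ or in $V(G_2)\setminus\{x,y,z_2\}$, so its contraction can have neighbors only in $\{x,y,z_1\}$ or $\{x,y,z_2\}$ respectively, and hence cannot be adjacent to all four of $\{x,y,z_1,z_2\}$. So no such contraction exists, and $G + z_1z_2$ is nIL.

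For the ``if'' direction, assume without loss of generality that $e$ is non-triangular in $G_1$. Theorem~\ref{ThmHLS} gives that $G$ is nIL, so I need only show that $G + e'$ is IL for every non-edge $e'$ of $G$. If both endpoints of $e'$ lie in some $V(G_i)$, then $G_i + e'$ is IL by maxnility of $G_i$, and $G + e' \supseteq G_i + e'$ is IL. The remaining case is $e' = u_1u_2$ with $u_1 \in V(G_1)\setminus\{x,y\}$ and $u_2 \in V(G_2)\setminus\{x,y\}$. Here I pass to a minor: by Lemma~\ref{lemma-2-connected}, $G_2$ is 2-connected, so the component $C$ of $G_2 - \{x,y\}$ containing $u_2$ has neighbors at both $x$ and $y$ in $G_2$. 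Contracting $C$ in $G + e'$ to a single vertex $w$ (and deleting the remaining components of $G_2 - \{x,y\}$) yields a minor consisting of $G_1$ together with $w$, where $w$ is adjacent to exactly $x$, $y$, and $u_1$. Further contracting the edge $u_1w$ produces $G_1$ with the two edges $xu_1$ and $yu_1$ present. If both were already in $E(G_1)$, then $xyu_1$ would be a triangle through $e$ in $G_1$, contradicting the non-triangularity hypothesis. So at least one of $xu_1, yu_1$ is new, and adding it to the maxnil graph $G_1$ produces an IL graph. Hence the minor is IL, and so is $G + e'$.

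The main obstacle is the crossing case $e'=u_1u_2$ in the ``if'' direction: one must extract from $G + e'$ a minor that simultaneously contracts the relevant component of $G_2 - \{x,y\}$ into a single vertex adjacent to both $x$ and $y$ (using 2-connectivity of $G_2$) and leverages the non-triangularity of $e$ in $G_1$ to ensure that at least one of the two edges $xu_1, yu_1$ created by the second contraction is genuinely new. The ``only if'' direction, by comparison, is essentially bookkeeping once the correct $K_4$-sum decomposition is identified.
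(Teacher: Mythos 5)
Your proof is correct. The ``if'' direction is essentially the paper's argument (with the roles of $G_1$ and $G_2$ swapped): contract the relevant component of one summand minus $\{x,y\}$ onto the crossing vertex, then contract the new edge, and use non-triangularity of $e$ to see that at least one of $xu_1, yu_1$ is genuinely new in the maxnil summand; you are in fact a bit more careful than the paper, which loosely says ``contract $G_1$ to $b_1$'' where it means the component of $G_1-\{x,y\}$ containing $b_1$. The ``only if'' direction is where you diverge: you present $G+z_1z_2$ as a single clique sum over the $K_4$ on $\{x,y,z_1,z_2\}$ and then verify the $K_7$-minus-a-triangle criterion of Theorem~\ref{ThmHLS}, whereas the paper writes the same graph as two successive clique sums over triangles, $G_1\cup_{\Delta_1}K_4\cup_{\Delta_2}G_2$, so that nIL-ness is immediate from the observation that clique sums over $K_t$ with $t\le 3$ of nIL graphs are always nIL. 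Your route works but costs you one extra (and only implicitly justified) observation: since $\{x,y,z_1,z_2\}$ induces a complete graph, at most one vertex of the missing triangle of a putative $K_7$ minus a triangle can lie in the cut set, so at least one contracted component would have to be adjacent to all four cut vertices --- and it is this that your ``no component sees all four'' remark actually rules out. Spelling that sentence out would close the only small gap; otherwise the two proofs buy the same theorem, with the paper's decomposition trading your case check for a slightly less obvious identification of $G+t_1t_2$ as an iterated triangle sum.
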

\begin{proof} 

The graph $G$ is nIL by Theorem~\ref{ThmHLS}.
Suppose $e$ is non-triangular in at least one $G_i$, say $G_2$.
To prove $G$ is maxnil, it is enough to show that
for all $b_i \in V(G_i)$,
$G + b_1 b_2$ is IL.
Denote the endpoints of $e$ in $G$ by $x, y$.
By Lemma~\ref{lemma-2-connected}, $G_1$ is 2-connected,
so each of $x, y$ has at least one neighbor in $G_1$.
So if we contract $G_1$ to $b_1$ and then contract $b_1 b_2$ to $b_2$,
we obtain a graph $G'_2$ that 
contains $G_2$ as a proper subgraph since
$b_2 x \in E(G'_2)$.
So $G'_2$ is IL since $G_2$ is maxnil.
But $G'_2$ is a minor of $G$, which is nIL,
so we have a contradiction.

To prove the converse, suppose $e $ is triangular in  $G_1$ and $G_2$.
Let $t_i \in V(G_i)$ be adjacent to both endpoints of $e$.
Let $K$ be a complete graph on four vertices,
with vertices labeled $x, y, t_1, t_2$.
Denote the triangles induced by $x, y, t_i$
in $K$ and in $G_i$ by $\Delta_i$.
Then by Theorem~\ref{ThmHLS},
$G':= G_1 \cup_{\Delta_1} K  \cup_{\Delta_2} G_2$ is nIL.
But $G'$ is isomorphic to $G + t_1 t_2$,
so $G$ is not maxnil.

\end{proof}

\begin{lemma} 
Let $G$ be a maxnil graph with vertex connectivity 3 and a vertex cut set $S=\{x,y,z\}$. 
Let $G_1,G_2,...,G_r$ denote the connected components of $G-S$.
Then  $\big<S\big>_G\simeq K_3$ and
$\big<G_i, S \big>_G$ is maxnil for all $1\le i \le r$.
\label{cliquesum3}
\end{lemma}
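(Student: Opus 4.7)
My plan is to follow the template of Lemma~\ref{cliquesum2} but with an extra mechanism for producing all three edges of the triangle on $S$. The proof splits into two parts: first show $\langle S\rangle_G \simeq K_3$, then show each $\langle G_i, S\rangle_G$ is maxnil.

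For the first part I would argue by contradiction. Suppose $E^* := \{xy, xz, yz\} \setminus E(G)$ is nonempty and let $G^* := G + E^*$, so that $\langle S\rangle_{G^*} = K_3$. The goal is to prove $G^*$ is nIL, contradicting the maxnilness of $G$. Write $G^* = G^*_1 \cup_{K_3} \cdots \cup_{K_3} G^*_r$ where $G^*_i := \langle G_i, S\rangle_{G^*}$. By the consequence of Theorem~\ref{ThmHLS} that clique sums over $K_t$ of nIL graphs are nIL for $t\le 3$, it suffices to prove each $G^*_i$ is nIL.

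The key step, where things differ from the 2-cut case, is producing the triangle on $S$ out of just one other component. Fix $i$ and pick any $j\ne i$. Since $G$ has vertex connectivity $3$ and $S$ is a 3-cut, each of $x,y,z$ has a neighbor in the connected piece $G_j$. Contract $G_j$ to a single vertex $v$ in $G$ and delete every remaining component $G_k$ for $k\ne i,j$; the result $M_i$ is a minor of $G$, so it is nIL, and $v$ has degree exactly $3$ with $N(v)=S$. Now apply a $Y\nabla$-move at $v$: the vertex $v$ and its three edges are removed and the triangle on $\{x,y,z\}$ is inserted, producing exactly $G^*_i$. Because the $Y\nabla$-move preserves the nIL property (as recalled in the introduction), $G^*_i$ is nIL. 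This trick — promoting a contracted component into a triangle via one $Y\nabla$-move — is the main novelty; it cleanly sidesteps the difficulty that, when $r=2$, no single minor contraction of the one remaining component $G_j$ can create all three triangle edges simultaneously.

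For the second part, with $\langle S\rangle_G = K_3$ in hand, I would copy the endgame of Lemma~\ref{cliquesum2}: for each $i$, extend $\langle G_i, S\rangle_G$ to a maxnil graph $H_i$ on the same vertex set; then $H := H_1 \cup_{K_3} \cdots \cup_{K_3} H_r$ is nIL (again by the $K_3$-clique-sum property), contains $G$ as a subgraph, and so equals $G$ by maxnilness, forcing each $H_i = \langle G_i, S\rangle_G$ and thus establishing that each is maxnil. The hardest step is the first part, specifically ensuring a single other component can supply all the missing edges of the triangle on $S$; the $Y\nabla$-move is the clean tool that makes this work uniformly for all $r\ge 2$.
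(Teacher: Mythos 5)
Your proof is correct and takes essentially the same route as the paper's: contract a second component $G_j$ onto a single vertex $v$ adjacent to all of $S$ (using 3-connectivity) to get a nIL minor $M_i$, convert between the degree-3 vertex $v$ and the triangle on $S$ to conclude each $\big<G_i,S\big>_{G^*}$ is nIL, reassemble by $K_3$-clique sums via Theorem~\ref{ThmHLS} to contradict maximality, and finish the maxnil claim for the pieces exactly as in Lemma~\ref{cliquesum2}. The only cosmetic difference is the direction of the move: the paper applies a $\nabla Y$-move to the triangle of $G'_i$ to obtain a subgraph of $M_i$, while you apply the inverse $Y\nabla$-move to $M_i$ at $v$; both steps rest on the same standard fact that these moves preserve intrinsic linking (so your parenthetical citation should really invoke the $\nabla Y$ direction applied to $G^*_i$, but the mathematics is identical).
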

\begin{proof}
Suppose $\big<S\big>_G \not \simeq K_3$.
Let $G'$ be the graph obtained from $G$ 
by adding one or more edges to $\big<S\big>_G$ so that 
$S$ induces a triangle $T$ in $G'$.
For $1 \le i \le r$, let $G'_i = \big< G_i, T \big>_{G'}$.
We see as follows that $G'_i$ is nIL.
Pick any $j \ne i$, and in the graph $\big< G_i , G_j , S \big>_G$,
contract $G_j$ to an arbitrary vertex $v$ in $G_j$.
Then $v$ is connected to each of $x, y, z$
since $G$ is 3-connected
and hence  each of $x, y, z$ has at least one neighbor in $G_j$.
The graph $M_i$ obtained this way is minor of $G$, and hence is nIL.
Performing a $\nabla Y$-move on $T\subset G_i'$ we obtain a subgraph of $M_i$. 
Since $M_i$ is nIL, so is $G_i'$.
By Theorem~\ref{ThmHLS},
$G' = G'_1 \cup_{T} \cdots \cup_{T} G'_r$ is nIL,
which contradicts the maximality of $G$.
So  $T = \big<S\big>_G\simeq K_3$.

To show $\big<G_i, S \big>_G$ is maxnil, 
repeatedly add new edges to it, if necessary,
to get a maxnil graph $H_i$.
Then $H := H_1 \cup_{T} \cdots \cup_{T} H_r$ is nIL by Theorem~\ref{ThmHLS} and contains $G$ as a subgraph,
so $H = G$ and every $\big<G_i, S \big>_G$
is maxnil.
\end{proof}

Let $G$ be a  graph and let $T=\big<x,y,z,t\big>_G$ be an induced $K_4$ subgraph (\dfn{tetrahedral graph}).
 We say $T$ is \textit{strongly separating} 
if $G-T$ has at least  two connected components $C_1, C_2$
such that every vertex of $T$ has a neighbor in each $C_i$.

\begin{lemma}Let $G_1$, $G_2$ be maxnil graphs and let $G=G_1\cup_{\triangle} G_2$ be the clique sum of $G_1$ and $G_2$ over a  $K_3$ subgraph $\Delta=\big<x,y,z\big>_G$.
Assume  $\Delta$ is a minimal vertex cut  set in $G$.
Then 
$G$ is maxnil if and only if  for some $i \in \{1,2\}$, every induced  $K_4$ subgraph of the form $\big<x,y,z,t \big>_{G_i}$ is strongly separating.

\label{lemmajoin3}
\end{lemma}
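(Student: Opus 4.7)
The plan is to apply Theorem~\ref{ThmHLS} twice---once to an auxiliary $K_4$-clique sum and once to a $K_5$-clique sum producing $G + t_1 t_2$---and to interpret strong separation as precisely the condition needed to realize $K_7$ minus a triangle as a minor. Two preliminary observations will be used: first, $G$ is nIL by Theorem~\ref{ThmHLS} applied to the $K_3$-sum $G_1 \cup_\Delta G_2$; second, $K_7$ minus a triangle is itself IL, since it contains as a spanning subgraph the 7-vertex Petersen-family graph obtained from $K_6$ by one $\nabla Y$-move on a triangle. In addition, minimality of $\Delta$ as a cut set in $G$ forces each of $x,y,z$ to have a neighbor in every component of $G_i - \Delta$, $i=1,2$.

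For the ``if'' direction I would assume without loss of generality that every induced $K_4$ of the form $\langle x,y,z,t\rangle_{G_1}$ is strongly separating, and dispose of interior edge additions immediately by maxnility of $G_1$ and $G_2$. For a crossing edge $ab$ with $a \in V(G_1) \setminus \Delta$ and $b \in V(G_2) \setminus \Delta$, I would contract the component of $G_2 - \Delta$ containing $b$ to a single vertex $v$ and delete the other components; the neighbor property above yields a minor $M_1 = G_1 + v$ of $G + ab$ in which $v$ is adjacent to $\{x,y,z,a\}$. If $a$ is non-adjacent to some $w \in \Delta$ in $G_1$, contracting $av$ in $M_1$ produces a supergraph of $G_1 + aw$, which is IL by maxnility of $G_1$. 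Otherwise $\langle x,y,z,a\rangle_{G_1}$ is an induced $K_4$; the strong-separation hypothesis supplies components $C_1, C_2$ of $G_1 - \{x,y,z,a\}$ each containing neighbors of every vertex of $\{x,y,z,a\}$, and contracting them to $c_1, c_2$ yields a minor of $M_1$ isomorphic to $K_7$ minus the triangle on $\{v, c_1, c_2\}$. Either way $G + ab$ has an IL minor.

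For the ``only if'' direction I would argue the contrapositive: assuming that for each $i$ there is some $t_i \in V(G_i) \setminus \Delta$ with $\langle x,y,z,t_i\rangle_{G_i}$ an induced, non-strongly-separating $K_4$, I would show $G + t_1 t_2$ is nIL. The set $S = \{x,y,z,t_1,t_2\}$ induces a $K_5$ in $G + t_1 t_2$, and $G + t_1 t_2 = H_1 \cup_S H_2$, where $H_i$ is formed from $G_i$ by adjoining the opposite $t_j$ with edges to the other four vertices of $S$. Each $H_i$ is itself a $K_4$-clique sum of $G_i$ with $K_5$, so Theorem~\ref{ThmHLS} (with the adjoined $t_j$ supplying one of the three required contracted components) shows $H_i$ is IL iff $\langle x,y,z,t_i\rangle_{G_i}$ is strongly separating in $G_i$; the hypothesis excludes this, so $H_1, H_2$ are nIL. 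Applying Theorem~\ref{ThmHLS} to the $K_5$-sum $H_1 \cup_S H_2$, the graph $G + t_1 t_2$ is IL iff two components of $G - S$ can be contracted to vertices $c_1, c_2$ so that the three missing edges form a triangle on some $s^* \in S$ together with $c_1, c_2$. Because any $G_1$-side component is non-adjacent to $t_2$ and any $G_2$-side component to $t_1$, the case $s^* \in \{x,y,z\}$ (which would demand $c_1, c_2$ adjacent to both $t_1$ and $t_2$) is impossible, while $s^* = t_j$ forces both $c_i$ to lie on the same side and to be adjacent to all of $\{x,y,z,t_{3-j}\}$---a strong separation excluded by hypothesis. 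Hence $G + t_1 t_2$ is nIL and $G$ is not maxnil.

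The main obstacle I expect is the careful case analysis in the ``only if'' step, in particular verifying that the three missing edges of the candidate $K_7 - T$ minor must concentrate on only three vertices and that the mixed-side configuration therefore fails (the forced missing edges $\{c_1, t_2\}, \{c_2, t_1\}, \{c_1, c_2\}$ touch four vertices rather than three). A smaller preliminary step is the fact that $K_7$ minus a triangle is IL, which is dispatched by exhibiting the one-step $\nabla Y$-image of $K_6$ as a spanning subgraph.
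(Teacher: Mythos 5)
Your proof is correct and follows essentially the same route as the paper's: the ``if'' direction first disposes of the case where $\langle x,y,z,a\rangle_{G_1}$ is not a clique via maxnility of $G_1$, then builds a $K_7$ minus a triangle minor from the two components guaranteed by strong separation together with a contracted component from the other side; the ``only if'' direction decomposes $G+t_1t_2$ into clique sums of nIL pieces and applies Theorem~\ref{ThmHLS}. The only cosmetic difference is that you phrase the converse as a single $K_5$-sum $H_1\cup_S H_2$ with an explicit case analysis of the missing triangle, where the paper writes the same decomposition as two successive $K_4$-sums $G_1\cup_{T_1}\langle x,y,z,t_1,t_2\rangle\cup_{T_2}G_2$.
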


\begin{proof}
By Theorem~\ref{ThmHLS}, $G := G_1 \cup_{\Delta} G_2$ is nIL. 
Then $G$ is maxnil if and only if  for every  $t_1\in V(G_1)-V(\Delta)$ and $t_2\in V(G_2)-V(\Delta)$, $G' :=G+t_1t_2$ is IL.

First, suppose for some $i$ at least one of $x,y,z$ is not connected to $t_i$,
say $x t_2 \notin E(G_2)$. 
Contracting $G_1 - \{y,z\}$ to $x$ produces $G_2+t_2x$ as a minor of $G'$.
Since $G_2$ is maxnil, this minor is IL, and hence $G'$ is IL, as desired. 
So we can assume $\big<x,y,z,t_i \big>_{G_i}$ is a  tetrahedral graph for both $i = 1,2$.

Assume every tetrahedral graph in $G_2$ that contains $\Delta$ is strongly separating.
So $G_2-\big<x,y,z,t_2\big>_{G_2}$ has at least two connected components each of which, when contracted to a single vertex, is adjacent to all four vertices $x, y, z, t_2$. 
In Figure~\ref{K3clique} these vertices are denoted by $c_1$ and $c_2$.
Now, if  the component of $G_1-\Delta$ that contains $t_1$ 
is contracted to $t_1$,
this vertex too will be adjacent to $x, y, z, t_2$.
So we get a minor of $G$ isomorphic to $K_7$ minus a triangle, 
which is IL since it contains a Petersen family graph
(the one obtained by one $\nabla Y$-move on $K_6$) as a minor. 
It follows that $G'$ is IL, and therefore $G$ is maxnil.

\begin{figure}[htpb!]
\begin{center}
\begin{picture}(200, 120)
\put(0,0){\includegraphics[width=3in]{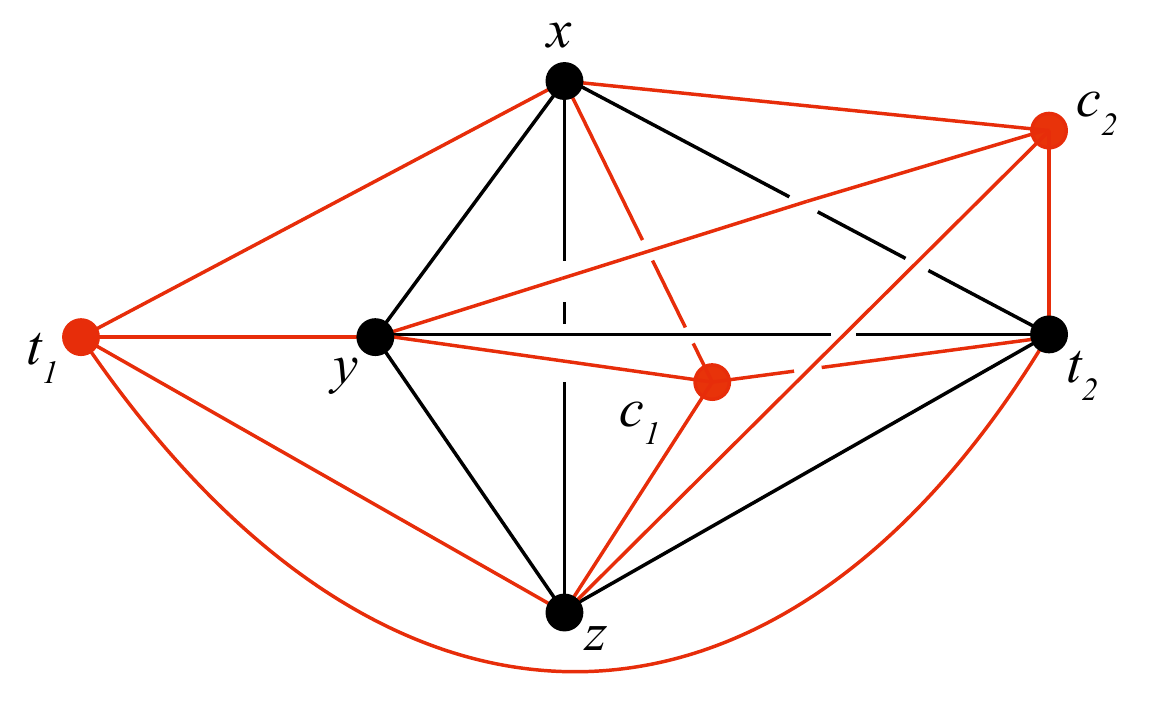}}
\end{picture}
\caption{\small A $K_7$ minus a triangle minor of the graph $G$.}
\label{K3clique}
\end{center}
\end{figure}

To prove the converse, 
for $i=1,2$ let $t_i$ be a vertex in $G_i$ such that 
$T_i := \big<x,y,z,t_i\big>_{G_i}$ is a tetrahedral graph
that is not strongly separating.  
Let $G'=G+t_1t_2$.
Then $G' =  
G_1\cup_{T_1} \big<x,y,z,t_1,t_2\big>_{G'}\cup_{T_2} G_2$.
Each of these click sums is over a $K_4$, 
each summand is nIL, and each of $T_1, T_2$ is  non-strongly separating;
so, by Theorem~\ref{ThmHLS}, $G'$ is nIL, and hence $G$ is not maxnil.

\end{proof}

 Unlike the vertex connectivity 2 and 3 cases, it is not true that a minimal vertex cut set in a 4-connected maxnil graph must be a clique. 
 The four neighbors of $b$ in the graph depicted in Figure~\ref{Fig1025}(a) form a vertex cut set, but the graph induced by its vertices has exactly 2 edges. 
 The four neighbors of any vertex in the graph $Q_{13,3}$ in Figure~\ref{Q133}(a) form a discrete vertex cut set. 
 However, if a maxnil graph $G$ has vertex connectivity 4, the following lemma provides some restrictions on the shape of the subgraph induced by the vertices of any minimal vertex cut set.

 \begin{lemma} Let $G$ be a maxnil graph and assume $\{x,y,z,t\}$ is a minimal vertex cut. 
Let $S=  \big<x,y,z,t\big>_G$.
Then $S$ is either a clique or a subgraph of $C_4$ (a 4-cycle).
\label{vertexcut4}
\end{lemma}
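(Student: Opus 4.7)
I would proceed by contradiction: suppose $S$ is neither $K_4$ nor a subgraph of $C_4$, so $S$ is one of the four forbidden graphs on four vertices---the claw $K_{1,3}$, $K_3+K_1$, the paw, or the diamond $K_4-e$. Let $G_1, \ldots, G_r$ be the components of $G-S$. Minimality of the cut forces $r\ge 2$ and, crucially, guarantees that every vertex of $S$ has a neighbor in every $G_i$ (since otherwise $S$ minus that vertex would still separate $G$).

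The strategy is, in each forbidden case, to produce either a Petersen family minor of $G$ (contradicting $G$ nIL) or a non-edge $e\subseteq S$ with $G+e$ still nIL (contradicting maxnil). I would split by $r$.

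For $r\ge 3$, I would contract three components of $G-S$ to single vertices $c_1, c_2, c_3$, each of which is then adjacent to all of $S$. The resulting 7-vertex minor on $\{x,y,z,t,c_1,c_2,c_3\}$ has as its non-edges precisely the non-edges of $S$ together with $\{c_1c_2, c_1c_3, c_2c_3\}$. In the claw case (center $x$), these six non-edges form two disjoint triangles, $\{yz,yt,zt\}$ and $\{c_1c_2,c_1c_3,c_2c_3\}$, so the minor is exactly $K_{3,3,1}$ (with parts $\{y,z,t\}$, $\{c_1,c_2,c_3\}$, $\{x\}$)---a Petersen family graph---contradicting $G$ being nIL. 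For the remaining forbidden configurations I would then perform an additional minor operation: contracting an edge $c_\ell s$ for a suitable $s\in S$ identifies $c_\ell$ with $s$ and installs in $\langle S\rangle$ the three edges from $s$ to the other vertices of $S$. Iterating this as needed, the construction should terminate in either $K_{3,3,1}$ or $K_7-\triangle$ (which is IL because it contains $K_{3,3,1}$) as a minor of $G$, again contradicting $G$ nIL.

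For $r=2$, I would produce an edge $e$ with $G+e$ still nIL. The key sub-claim is: for each $i\in\{1,2\}$ and each missing edge (or set of missing edges) $F$ of $S$ all incident to a single $s\in S$, the graph $\langle G_i, S\rangle_G + F$ is nIL, realized as a minor of $G$ by contracting the other component $G_j$ to a vertex $v$ (adjacent to all of $S$ by minimality) and then contracting the edge $vs$ to identify $v$ with $s$. When $e$ can be chosen so that $\langle S\rangle_{G+e} = K_4$, the graph $G+e$ decomposes as a clique sum over $K_4$ of two nIL pieces; by Theorem~\ref{ThmHLS}, such a clique sum is IL only if at least three components of $G+e-S$ can be contracted to produce $K_7-\triangle$, which is impossible with $r=2$. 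Hence $G+e$ is nIL, contradicting maxnil.

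The main obstacle is that a single merge installs three edges incident to a single vertex of $S$, so configurations where the missing edges of $S$ are not all incident to a common vertex---notably the claw with $r=2$, where the non-edges $yz,yt,zt$ form a triangle on $\{y,z,t\}$---require either combining the merge trick with further minor reductions (exploiting the structure of the components $G_j$) or performing a $\nabla Y$-move on the newly created triangle to reduce to a previously handled case. Verifying that these extra maneuvers are available in every remaining subcase is the delicate part of the proof.
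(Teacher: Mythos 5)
Your overall architecture matches the paper's: identify the four forbidden configurations, kill $r\ge 3$ with a Petersen-family minor, and kill $r=2$ by adding the missing edges of $S$ and exhibiting the result as a clique sum over $K_4$ of nIL pieces (nIL by Theorem~\ref{ThmHLS} since only two components remain). Several of your subcases are complete and correct: the claw with $r\ge3$ (and with it the paw and diamond, which \emph{contain} a claw, so the uncontracted $7$-vertex minor already has a $K_{3,3,1}$ subgraph --- no further operation is needed), and the $r=2$ cases where the missing edges of $S$ are all incident to one vertex (diamond, paw, $K_3\sqcup K_1$), where your merge trick is exactly the paper's. But there are two genuine gaps, and they are precisely the two subcases you flag as ``delicate.'' First, for $S\cong K_3\sqcup K_1$ with exactly three components, your iterative scheme fails: the $7$-vertex minor has $15$ edges with complement $K_{1,3}\sqcup K_3$, so it \emph{is} the Petersen-family graph $\nabla Y(K_6)$ --- it is neither $K_{3,3,1}$ nor $K_7$ minus a triangle, and neither of those can be a minor of it ($K_7-\triangle$ has too many edges, and $K_{3,3,1}$ has the same vertex and edge count but a different degree sequence). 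Worse, your proposed contraction of $c_\ell t$ collapses it to $K_6$ minus an edge, which is nIL. The only way to close this subcase is to recognize the minor as $\nabla Y(K_6)$ itself (the paper also mislabels it as ``$K_7$ minus a triangle,'' but the graph obtained is IL regardless).

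Second, the claw with $r=2$ is left open: you correctly observe that the three missing edges form a triangle on the leaves, so no single merge completes $S$ to $K_4$ in either summand, and you name the right tool (a $Y\nabla$/$\nabla Y$ manoeuvre) but do not carry it out. The paper's resolution is to contract $G_i$ to a vertex $t_i$, delete the edge $t_i t$ and any edges among the leaves $x,y,z$, so that $t_i$ has degree exactly $3$ with neighbors $x,y,z$, and then perform a $Y\nabla$-move at $t_i$; this installs the full triangle $xyz$ while staying nIL, after which the $K_4$ clique-sum argument applies. Since both of these subcases require an idea (a $\nabla Y$-type move) that your proposal gestures at but does not supply, and since your stated terminal dichotomy for $r\ge3$ is actually false in one subcase, the proposal as written does not constitute a complete proof.
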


\begin{proof}
Assume that $S$ is neither a clique nor a subgraph of a 4-cycle. 
This implies that
if $S$ has a vertex of degree at least 3, then it contains $K_{1,3}$ as a subgraph;
and if every vertex of $S$ has degree less than 3, then $S$ contains $K_3$ as a subgraph.
Below, we consider  these two cases separately.

\textit{Case 1.}
$S$ has a $K_3$ subgraph.
We can assume that $x, y, z$ induce a triangle in $G$. If $G-S$ has at least three connected components, contracting each of them to a single node would produce a minor of $G$ isomorphic to $K_7$ minus a triangle, contradicting that $G$ is nIL.
So $G-S=G_1\sqcup G_2$, with $G_1$ and $G_2$ each connected.

Since $\{x,y,z,t\}$ is a minimal vertex cut set in $G$,
each of $x, y, z, t$ has at least one neighbor in each $G_i$, $i=1,2$.
Contracting $\big<G_i,t\big>_G$ to $t$ produces a minor of $G$, denoted by $G'_i$, which must be nIL.
Since $xyz$ is a triangle and each of $x, y, z$ has at least one neighbor in each $G_i$,
$\{x,y,z,t\}$ induces a clique $T$  in both $G'_1$ and $G'_2$.
By \cite{HLS}, the clique sum $G'=G'_1\cup _{T} G'_2$  is nIL since $G'-T=G_1\sqcup G_2$; but $G'$ strictly contains $G$ as a subgraph, a contradiction.

\textit{Case 2.}
$S$ has a $K_{1,3}$ subgraph.
We can assume that $t$ is adjacent to $x$, $y$ and $z$ in $G$. If $G-S$ has at least three connected components, contracting each of them to a single node would produce a minor of $G$ containing a subgraph isomorphic to $K_{3,3,1}$, thus $G$ is IL. 
So $G-S=G_1\sqcup G_2$, with $G_1$ and $G_2$ connected.
For $i=1,2$, contracting each of $G_i$ to a single node $t_i$, deleting the edge $t_it$, deleting any existing edges of $\big<x,y,z \big>_G$, and then performing a $Y\nabla$-move at $t_i$, produces a nIL graph, denoted by $G'_i$. 
Let $G'=G'_1\cup _{K_4}G'_2$ be the clique sum 
over the complete graph with vertices $x,y,z,t$. 
By Theorem~\ref{ThmHLS}, $G'$ is nIL since $G'-S=G_1\sqcup G_2$; 
but $G'$ strictly contains $G$ as a subgraph, a contradiction.

\end{proof}
\begin{lemma} Let $G=G_1\cup_SG_2$ be the clique sum of maxnil graphs $G_1$ and $G_2$ over $S=\big<x,y,z,t\big>_G\simeq K_4$. 
Assume $S$ is a minimal vertex cut set in $G$.
Then $G$ is maxnil if and only if,  in both $G_1$ and $G_2$, $S$ is not strongly separating.
\label{lemmajoin4}
\end{lemma}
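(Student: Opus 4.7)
The plan is to use the minimality of $S$ as a vertex cut to reduce the notion ``strongly separating'' to a simple count of components, and then to handle the cross-edges in the if-direction with explicit contractions. First, observe that if some $x \in S$ had no neighbor in a component $C$ of $G-S$, then $S \setminus \{x\}$ would already separate $C$ from the rest of $G$, contradicting the minimality of $S$. So every vertex of $S$ has a neighbor in every component of $G-S$, and since $G-S = (G_1-S) \sqcup (G_2-S)$, every component of $G_i - S$ is strong. Thus $S$ is strongly separating in $G_i$ exactly when $G_i - S$ is disconnected (assuming, as in any non-trivial clique sum, that $G_i - S \ne \emptyset$).

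For the only-if direction, suppose $S$ is strongly separating in, say, $G_1$, so that $G_1 - S$ has at least two components and $G_2 - S$ has at least one. Then $G - S$ has three or more strong components, and contracting each of three such components to a single vertex gives a $K_7$-minus-triangle minor of $G$. By Theorem~\ref{ThmHLS}, $G$ is IL, hence not maxnil.

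For the if-direction, assume $G_1 - S$ and $G_2 - S$ are both connected. Then Theorem~\ref{ThmHLS} immediately gives that $G$ is nIL, and it remains to show $G + e$ is IL for every non-edge $e$. Since $S$ is a $K_4$ in both $G_i$, either $e$ lies inside some $G_i$ or $e = ab$ with $a \in G_1 - S$ and $b \in G_2 - S$. In the former case $G_i + e$ is IL by the maxnil-ness of $G_i$, so $G + e \supseteq G_i + e$ is IL. For the cross case, contract the connected set $G_2 - S$ to the vertex $b$ to obtain a minor $H$ of $G + ab$ consisting of $G_1$ with $b$ adjacent to $S \cup \{a\}$ (where $b$ sees all of $S$ because every vertex of $S$ has a neighbor in $G_2 - S$). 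Now contract the edge $ab$ in $H$: the resulting graph is $G_1$ with $a$'s neighborhood enlarged to include all of $S$. If $a$ was not already adjacent to every vertex of $S$ in $G_1$, the contraction produces a strict supergraph of $G_1$, which is IL by the maxnil-ness of $G_1$; hence $G + ab$ is IL. A symmetric contraction of $G_1 - S$ to $a$ handles the case when $b$ is not adjacent to all of $S$ in $G_2$. In the remaining subcase, both $a$ and $b$ are adjacent to all of $S$, and the six vertices $\{x,y,z,t,a,b\}$ together with the edge $ab$ induce a $K_6$ in $G + ab$, which is IL since $K_6$ lies in the Petersen family.

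The main obstacle is organizing the cross-edge case; what makes the argument clean is the dichotomy between the subcase where some endpoint of $ab$ fails to see a vertex of $S$ (which reduces, via contracting one side of the clique sum, to an edge addition inside a maxnil summand) and the subcase where both endpoints see all of $S$ (which directly exhibits a $K_6$).
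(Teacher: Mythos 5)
Your proof is correct and follows essentially the same route as the paper: minimality of $S$ forces every vertex of $S$ to have a neighbor in every component of $G-S$, so ``strongly separating'' reduces to counting components, and Theorem~\ref{ThmHLS} together with a $K_7$-minus-a-triangle minor (one direction) and a $K_6$ minor from a cross edge (the other) settles both implications. The only difference is cosmetic: in the cross-edge case the paper contracts \emph{both} of $G_1-S$ and $G_2-S$ to single vertices at once, which produces the $K_6$ minor immediately and renders your three-way subcase analysis unnecessary.
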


\begin{proof}
If $S$ is strongly separating in $G_1$ or $G_2$, then $G-S$ has at least three connected components and contracting each of them to a single node produces a minor isomorphic to $K_7$ minus a triangle.

If, in both $G_1$ and $G_2$, $S$ is not strongly separating, 
then $\G - S$ has only two connected components.
Contracting each of the two components to a single node produces $K_6$  minus an edge as a minor (not $K_7$ minus a triangle); hence $G$ is nIL by Theorem~\ref{ThmHLS}. 
Adding an edge between a vertex in $G_1-S$ and a vertex $G_2-S$ and contracting $G_1-S$ and $G_2-S$ to single nodes produces a $K_6$ minor. 
It follows that $G$ is maxnil in this case.
\end{proof}

\begin{figure}[htpb!]
\begin{center}
\begin{picture}(200, 120)
\put(0,0){\includegraphics[width=3.3in]{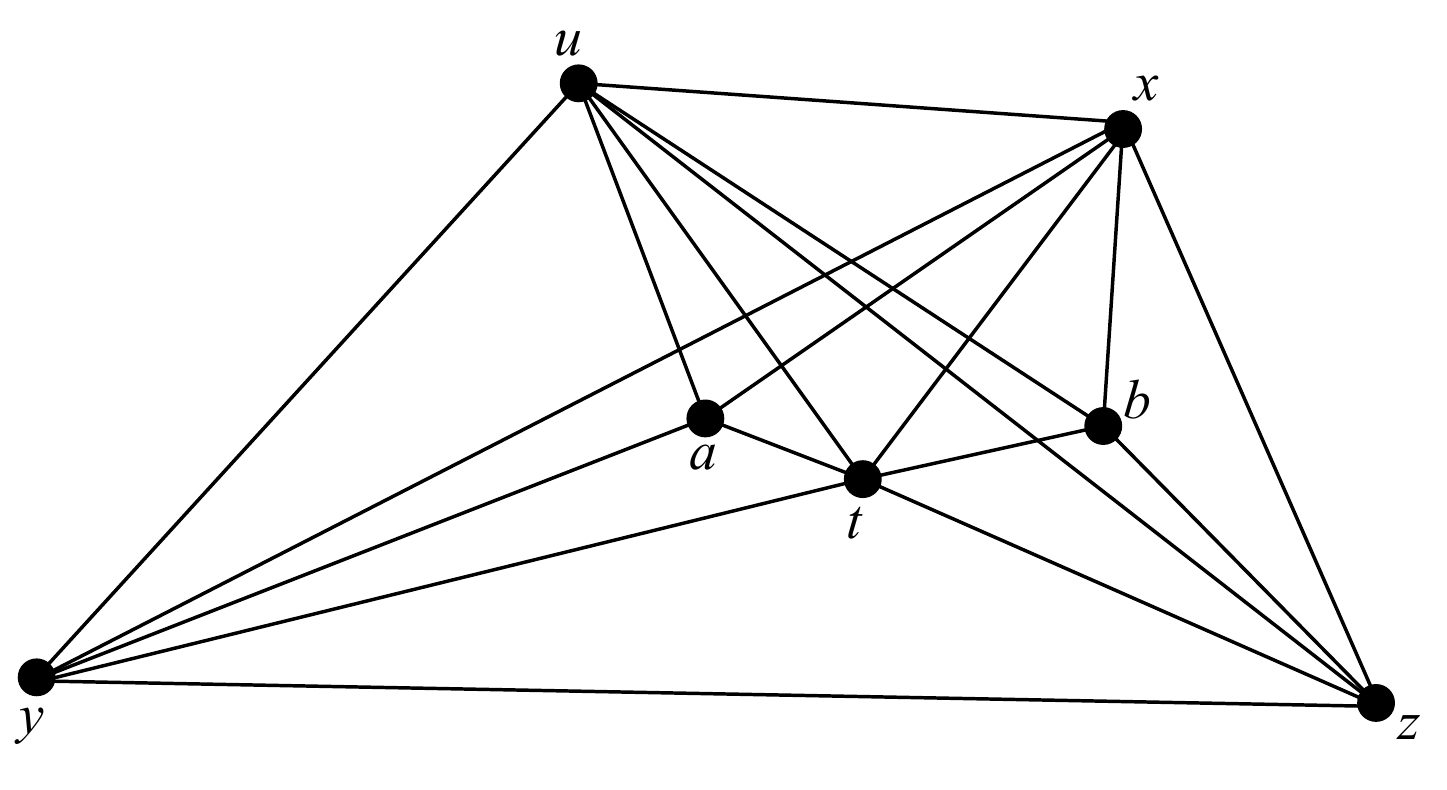}}
\end{picture}
\caption{\small A maxnil graph that is a clique sum over $K_5$ .}
\label{K5clique}
\end{center}
\end{figure}

The graph $G$ of Figure~\ref{K5clique} is maxnil since $G - u$ is a maximal planar graph. If S=$\big<x, y, z, t, u\big>$, $G_1=\big<a,x, y, z, t, u\big>$, and $G_2=\big<b,x, y, z, t, u\big>$, then $S \simeq K_5$, $G_1\simeq G_2 \simeq K_6^-$ ($K_6$ minus one edge), and $G=G_1\cup_S G_2$. 
This shows it is possible for the clique sum of two maxnil graphs over $S\simeq K_5$ to be nIL (and maxnil). 
However,  no clique $S$ of order 5 can be a minimal vertex cut set in a nIL graph $G$,  since then any connected component of $G-S$ would form a $K_6$-minor together with $S$, which would imply $G$ is IL.
For $t \ge 6$, any clique sum over $K_t$ is IL since $K_6$ is IL.

\vspace*{.1in}

J{\o}rgensen  studied clique sums of graphs that are maximal without a  $K_6$  minor \cite{J}.
These are graphs that do not contain a $K_6$ minor and a $K_6$ minor is created by the addition of any edge.
The class of maxnil graphs and the class of graphs that are maximal without a  $K_6$  minor are not the same, as shown in the following proposition.


\begin{figure}[htpb!]
\begin{center}
\begin{picture}(400, 200)
\put(0,0){\includegraphics[width=5.8in]{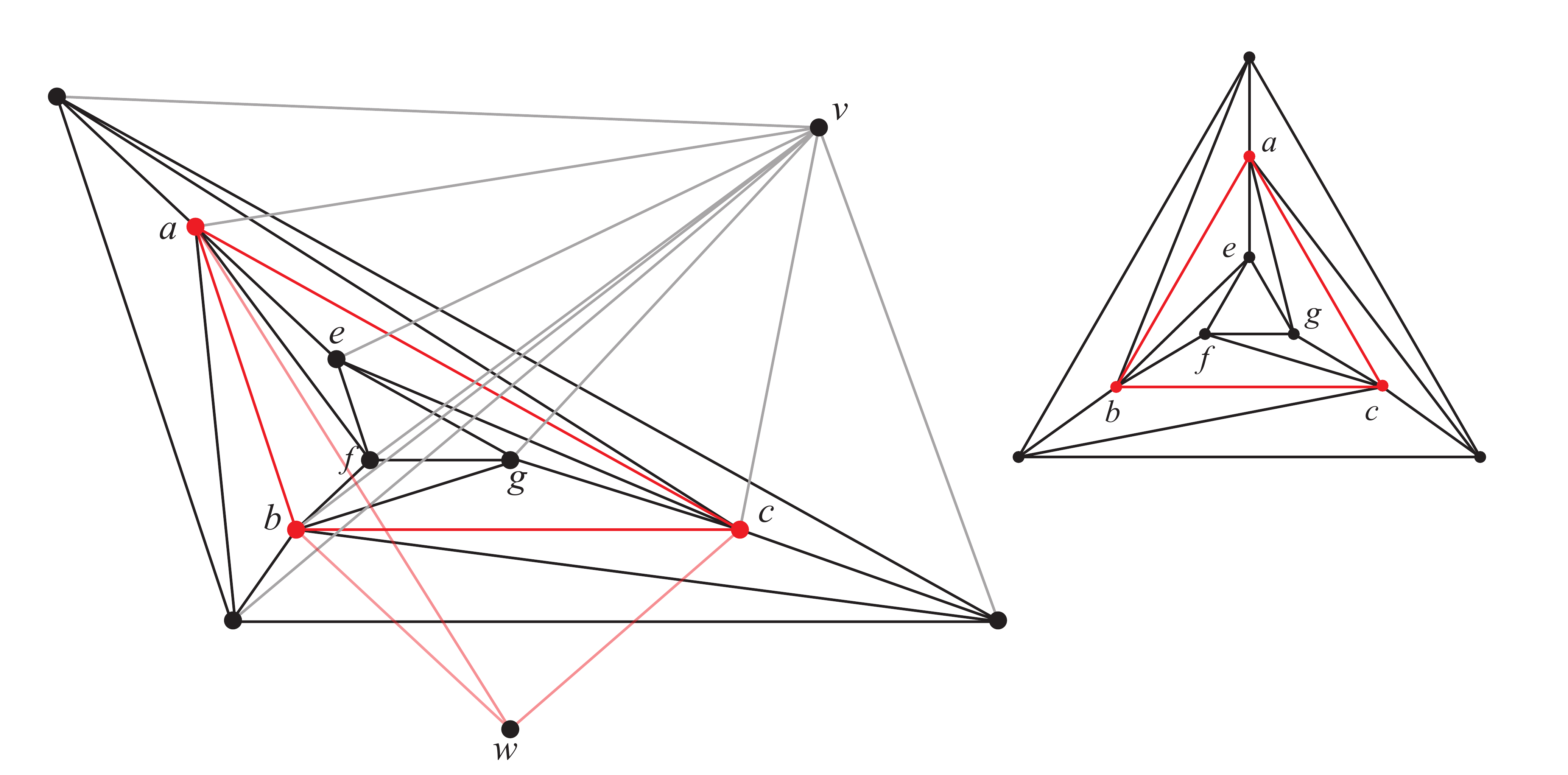}}
\put(300,70){\small $H=G\setminus\{v,w\}$}
\end{picture}
\caption{\small  {A maxnil graph $G$ (left) that is not maximal without a  $K_6$ minor is obtained by adding two  vertices to a plane triangulation with nine vertices (right)}}
\label{fig-telescope}
\end{center}
\end{figure}

{ \begin{proposition}
 The graph in Figure \ref{fig-telescope} is maxnil, and it is not maximal without a $K_6$ minor.
 \label{noK6}
\end{proposition}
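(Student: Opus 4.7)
The plan is to verify three things: (a) $G$ is nIL, (b) every non-edge of $G$ can be added only at the cost of introducing a Petersen-family minor, and (c) there is at least one non-edge whose addition produces a Petersen-family minor \emph{other} than $K_6$, so $G+e$ is still $K_6$-minor-free and hence $G$ is not maximal $K_6$-minor-free.

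For (a), the structure of $G$ as a plane triangulation $H$ plus two apex-like vertices $v,w$ is tailor-made for Lemma~\ref{lemma-almost-non-separating}. I would embed $H$ in $\R^2$ using its given planar embedding and place $v, w$ at $(0,0,\pm 1)$, joining each to its neighbors in $H$ by straight segments. The hypothesis of the lemma must then be checked for every cycle $C$ of $H$: since $H$ is a triangulation, the cycles of $H$ are in one-to-one correspondence with the connected subsets of its faces, and I would case-check the (finitely many) cycles, showing that for each one of its two sides is a union of triangular faces whose incident vertices in $H$ contain either all neighbors of $v$ or all neighbors of $w$, giving the required separation of $v$ and $w$.

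For (b), I classify non-edges $e$ of $G$ into three types: (i) $e \subset V(H)$; (ii) $e$ has exactly one endpoint in $\{v,w\}$; (iii) $e=vw$. In each case I would build an IL-witnessing minor of $G+e$ by contracting a carefully chosen connected subgraph of $H$. The natural targets are either $K_6$ (obtained by contracting paths in $H$ so that the contracted nodes, together with $v$ and $w$, form a clique of order 6) or, when a $K_6$ is not available, one of its $\nabla Y$-descendants such as $K_{3,3,1}$ or $K_{4,4,1}\setminus M$, which will appear by contracting in a way that creates a new degree-$3$ vertex on which a $Y\nabla$-move can be performed. Running through the finitely many orbits of non-edges under the symmetry group of $G$ proves maxnil.

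For (c) — the main obstacle — I need to exhibit a specific non-edge $e^{\star}$ for which $G+e^{\star}$ contains a Petersen-family graph but no $K_6$ minor. The plan is to pick the symmetric candidate $e^\star = vw$ (or another non-edge whose role in the construction is most constrained), verify from (b) that $G+e^\star$ contains, say, a $K_{3,3,1}$ minor produced by a single $Y\nabla$-move, and then prove directly that $G+e^\star$ has no $K_6$ minor. The $K_6$-freeness part is where the real work lies: rather than trying to enumerate branch-set assignments, I would use Mader's edge bound ($K_6$-minor-free graphs on $n$ vertices have at most $4n-10$ edges) together with a finer connectivity argument. If $G+e^\star$ has too few edges relative to $4n-10$, I still need to rule out a $K_6$ minor by showing any six pairwise-adjacent connected branch sets would have to pack through the low-connectivity cut separating $v$ and $w$ from $H$, which is obstructed by the facial structure of the triangulation $H$. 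The hardest step will be turning this intuition into a rigorous impossibility argument for the branch-set assignment; it likely requires examining how many of the six branch sets can contain $v$, $w$, or meet the small cut $N(v)\cap N(w)$, and deriving a contradiction from the planarity of $H$.
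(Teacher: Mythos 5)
Your plan correctly identifies $vw$ as the witness edge for the second claim, and step (a) is salvageable (since $a,b,c$ span a triangle of the plane graph $H$, no cycle of $H$ separates any two of them, so the hypothesis of Lemma~\ref{lemma-almost-non-separating} holds with $u=w$; note, though, that your phrasing about ``all neighbors of $v$'' cannot work, as $v$ is adjacent to every vertex of $H$). But the proposal misses the one idea that makes the proposition tractable: $G$ and $G+vw$ are clique sums. The paper writes $G=(H+v)\cup_{\langle a,b,c\rangle}\langle a,b,c,w\rangle$, notes $H+v$ is maxnil by Sachs (cone over a maximal planar graph) and that $\langle a,b,c,v\rangle$ is the unique induced $K_4$ of $H+v$ containing $a,b,c$ and is strongly separating, so Lemma~\ref{lemmajoin3} gives maxnil in one stroke. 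Your step (b) replaces this with an unexecuted orbit-by-orbit search for Petersen-family minors over all non-edges of an $11$-vertex graph; that is not a proof as it stands, and carrying it out would be a substantial computation.

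The more serious gap is in (c). You propose to prove $G+vw$ has no $K_6$ minor ``directly,'' leaning first on Mader's bound. That bound is inapplicable here: $H$ has $21$ edges, so $G+vw$ has $21+9+3+1=34$ edges on $11$ vertices, which is exactly $4n-10$ --- Mader only forces a $K_6$ minor above that threshold, and extremal $K_6$-minor-free graphs meeting it exist. You acknowledge that the remaining branch-set impossibility argument is ``the hardest step'' and do not supply it, so the second half of the proposition is unproved in your proposal. The paper's route avoids this entirely: $G+vw=(H+v)\cup_{\langle a,b,c,v\rangle}\langle a,b,c,v,w\rangle$ is a clique sum over $K_4$ of two $K_6$-minor-free graphs ($H+v$ and $K_5$), and clique sums over complete subgraphs preserve $K_6$-minor-freeness (cited to J{\o}rgensen). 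You should restructure your argument around these two clique-sum decompositions; without them, both halves of your proof remain open.
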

\begin{proof}
The graph $G$ in Figure \ref{fig-telescope} is obtained by adding vertices $v$ and $w$  to the plane triangulation  $H$:  vertex $v$  connects to all nine vertices of $H$ and vertex $w$ connects to vertices $a$, $b$ and $c$ of $H$.
The graph $H+v$ is maxnil, since it is a cone over a maximal planar graph \cite{Sa}. 
The graph $G$ is the clique sum over $K_3=\big<a, b, c\big>_G$ of  maxnil graphs $H+v$ and $K_4=\big<a,b,c,w\big>_G$. 
The graph $\big<a,b,c,v\big>_{H+v}$ is the only induced $K_4$ subgraph in  $H+v$ containing $a,b$ and $c$ and it is strongly separating in $H+v$.
So, by Lemma \ref{lemmajoin3}, $G$ is maxnil; in particular it has no $K_6$ minor.
The graph $G+vw$ is a clique sum over $K_4=\big<a,b,c,v\big>_G$ of graphs $H+v$ and $K_5=\big<a,b,c,v,w\big>$, both of which are $K_6$ minor free.
Hence, by \cite{J}, $G+vw$ is $K_6$ minor free, so $G$ is not maximal without a $K_6$ minor.
\end{proof}}

\begin{remark} { Starting with the graph $G$ in Proposition \ref{noK6}, one can construct graphs $G_n$ with $n\ge 11$ vertices that are maxnil but not maximal without a $K_6$ minor.
Take $G_{11}=G$ and construct $G_{11+k}$  from $G$ by triangulating the disk bounded by the triangle $efg$ with $k$ new vertices and adding edges between $v$ and these new vertices. 
The argument used in the proof of Proposition \ref{noK6} shows that $G_n$, $n\ge 11$, is maxnil but not maximal without a $K_6$ minor. 
We conjecture $n=11$ is the minimal order of a graph with this property, i.e., every maxnil graph with $n \le 10$ vertices is maximal without a $K_6$ minor.}
 
\end{remark}

\bibliographystyle{amsplain}

\end{document}